\documentclass[11pt]{amsart}
\usepackage{stmaryrd}
\usepackage{amssymb}
\usepackage{csvsimple}
\usepackage{siunitx}
\usepackage{hyperref}
\usepackage{url}
\usepackage{graphicx,color}
\usepackage{lineno}           
\usepackage[style=trad-abbrv,url=false,doi=true,
  eprint=true,isbn=false,backend=biber]{biblatex}

\addbibresource{literature.bib}
\def\R{\mathbb{R}}

\def\N{\mathbb{N}}

\def\Rinf{\R\cup \{+\infty\}}

\def\cD{\mathcal{D}}

\def\cI{\mathcal{I}}
\def\cJ{\mathcal{J}}

\def\cL{\mathcal{L}}

\def\cP{\mathcal{P}}

\def\cR{\mathcal{R}}
\def\cS{\mathcal{S}}
\def\cT{\mathcal{T}}

\def\a{\alpha}
\def\b{\beta}
\def\g{\gamma}
\def\d{\delta}

\def\s{\sigma}
\def\p{\partial}

\def\veps{\varepsilon}

\def\O{\Omega}

\def\GD{{\Gamma_D}}
\def\GN{{\Gamma_N}}



\def\transp{{\sf T}}

\def\hu{\widehat{u}}
\def\tu{\widetilde{u}}


\newcommand{\dv}[1]{\,{\mathrm d}#1}

\newcommand{\wcheck}[1]{#1\hspace{-.8ex}\mbox{\huge {\lower.45ex \hbox{$\textstyle \check{}$}}} \hspace{.5ex}}

\newcommand{\jump}[1]{\llbracket#1\rrbracket}   
\DeclareMathOperator{\id}{id}

\DeclareMathOperator{\diver}{div}


\DeclareMathOperator{\diam}{diam}



\DeclareMathOperator{\sign}{sign}

\setlength{\marginparwidth}{0.8in}
\let\oldmarginpar\marginpar
\renewcommand\marginpar[1]{
  \oldmarginpar[\raggedleft\footnotesize #1]
  {\raggedright\footnotesize #1}}

\newtheorem{definition}{Definition}

\newtheorem{proposition}[definition]{Proposition}

\newtheorem{corollary}[definition]{Corollary}
\newtheorem{remark}[definition]{Remark}

\newtheorem{example}[definition]{Example}

\numberwithin{definition}{section}
\definecolor{modmag}{RGB}{179,0,229}


\renewcommand{\text}{\textnormal}
\def\RT{{\mathcal{R}T}}
\def\CR{{cr}}

\def\tg{\widetilde{g}}
\def\tI{\widetilde{I}}

\def\tz{{\widetilde{z}}}

\def\hT{\widehat{T}}
\def\tD{\widetilde{D}}
\def\stop{{stop}}

\def\hz{\widehat{z}}
\def\tg{\widetilde{\g}}

\begin{document}
\title[Local mesh refinement for TV minimization]{Singular solutions, graded meshes,
and adaptivity for total-variation regularized minimization problems}
\author[S. Bartels]{S\"oren Bartels}
\address{Abteilung f\"ur Angewandte Mathematik,  
Albert-Ludwigs-Universit\"at Freiburg, Hermann-Herder-Str.~10, 
79104 Freiburg i.~Br., Germany}
\email{bartels@mathematik.uni-freiburg.de}
\author[R. Tovey]{Robert Tovey}
\address{INRIA de Paris, 2 Rue Simone IFF, 75012 Paris, France}
\email{robert.tovey@inria.fr}
\author[F. Wassmer]{Friedrich Wassmer}
\address{Abteilung f\"ur Angewandte Mathematik,  
Albert-Ludwigs-Universit\"at Freiburg, Hermann-Herder-Str.~10, 
79104 Freiburg i.~Br., Germany}
\email{friedrich.wassmer@pluto.uni-freiburg.de}
\date{\today}
\renewcommand{\subjclassname}{
\textup{2010} Mathematics Subject Classification}
\subjclass[2010]{49M29, 65N15, 65N50}
\begin{abstract}
Recent quasi-optimal error estimates for the finite element approximation of 
total-variation regularized minimization problems require the existence of 
a Lipschitz continuous dual solution. We discuss the validity of this condition and
devise numerical methods using locally refined meshes that lead to 
improved convergence rates despite the occurrence of discontinuities. It turns out
that nearly linear convergence is possible on suitably constructed meshes.
\end{abstract}
\keywords{Nonsmooth minimization, graded meshes, adaptivity, total variation, error estimates}

\maketitle

\section{Introduction} 
In this article we consider the finite element discretization of the
Rudin--Osher--Fatemi (ROF) model from \cite{ROF}
which serves as a model problem for general convex and nonsmooth minimization problems. 
This widely used model in image processing determines a function $u \in BV(\O) \cap L^2(\O)$ 
via a minimization of 
\[
I(u) = |Du|(\O) + \frac{\alpha}{2} \|g-u\|^2,
\]
where $|Du|(\O)$ denotes the total variation of $u \in BV(\O)\cap L^2(\O)$, $g \in L^2(\O)$ 
is the input data, for example a noisy image, and $\|g-u\|$ is the $L^2$ distance between 
the given image and its regularization. The fidelity parameter $\alpha > 0$ is also given 
and determines the balance between denoising and preserving the input image. 
For more information on analytical features, explicit solutions in particular examples, 
and numerical methods concerning this model we refer the reader to 
\cite{ChaLio97,Attouch2006,Ambrosio2000,HinKun04,CCCNP10,ChLeLu11,WanLuc11,Bart12,LaiMat12,Burg16,BeEfRu17,HHSVW19,CaiCha20-pre,ChaPoc21}. 
Since this model allows for and 
preserves discontinuities of the input function $g$, cf. \cite{Jump-set-paper}, continuous 
finite element methods are known to perform suboptimally, cf. \cite{BaNoSa15,Bart20a}. 
Recent results in \cite{Chamb} and \cite{Bart20a,Bart20b-pre} show that quasi-optimal convergence 
rates~$O(h^{1/2})$ for discontinuous solutions on quasi-uniform triangulations 
can be obtained by using discontinuous, low order Crouzeix-Raviart finite elements from 
\cite{Crouzeix-Raviart} or appropriate discontinous Galerkin methods. 
The error estimates bound the error for approximating minimizers for $I$ by minimizing
the discrete functional
\[
I_h(u_h) = \int_\O |\nabla_h u_h| \dv{x} + \frac{\alpha}{2} \|\Pi_{h}(g-u_h)\|^2
\]
over piecewise affine functions $u_h \in \cS^{1,cr}(\cT_h)$ that are continuous at midpoints
of element sides. Here $\nabla_h$ denotes the elementwise gradient and
$\Pi_h$ is the projection onto elementwise constant functions on the triangulation $\cT_h$.
Note that the functional $I_h$ defines a nonconforming approximation of $I$, as, e.g., 
jump terms of $u_h$ across interelement sides are not included. 
The quasi-optimal rate applies if the dual problem, given by a maximization
of
\[
D(z) = -\frac{1}{2\a} \|\diver z + \a g \|^2 + 
\frac{\alpha}{2} \|g\|^2 - I_{K_1(0)}(z)
\]
over vector fields $z\in W^2_N(\diver;\O)$ admits a Lipschitz continuous solution.
The indicator functional $I_{K_1(0)}$ of the closed unit ball centered at the origin
enforces the pointwise constraint $|z|\le 1$. Although Lipschitz continuity
is known to be true in some settings, following
an idea from~\cite{tovey2020} we show that the condition is not satisfied in general and 
lower convergence rates have to be expected. The generic counterexample uses the 
difference of two characteristic functions of balls $B_r^\pm = B_r(\pm r,0)$ with radius
$r>0$ that touch in the origin, i.e., 
\[
g = \chi_{B_r^+} - \chi_{B_r^-}.
\]
Precise characterizations, cf.~\cite{Jump-set-paper},
of dual solutions along the jump set of the primal solution,
given by 
\[
u = c_{r,\a} g,  \quad c_{r,\a} = \max\{1-d/(\alpha r),0\},
\]
imply that Lipschitz continuity of dual solutions fails at the origin if $\a r > d$. 
Surprisingly, this singularity does not appear to affect the convergence rates of approximations 
on sequences of uniform triangulations. 

To obtain the quasi-optimal rate $O(h)$ for weakly differentiable functions 
we investigate two numerical methods that construct 
locally refined meshes. The first approach uses the fact that the jump set~$J_u$ of the 
primal solution~$u$ is contained in the jump set~$J_g$ of the given function $g$, i.e., 
\[
J_u\subset J_g
\]
cf.~\cite{Jump-set-paper}. Reduced convergence rates are related to the 
suboptimal approximation of jumps and therefore our idea is to refine triangulations in 
a neighborhood of the jump set $J_g$. Since we aim at preserving shape regularity,
the grading strength is limited and it turns out that the minimal mesh-size $h_{\min}$ 
used at the discontinuity set cannot be smaller than $h^\b$ with $\b \le 2$ and
the average mesh-size~$h$. Our numerical analysis shows that a quadratic grading
is under suitable conditions on a piecewise regular solution, the correct refinement
strength to obtain a nearly linear convergence rate, i.e., we have 
\[
\|u-u_h\| = O(h |\log h|),
\]
where $u_h$ is the Crouzeix--Raviart finite element solution. In one-dimensional situations
it coincides with the $P1$ approximation and the logarithmic factor can be omitted.

The approach of using graded meshes can only be efficiently applied if the given
function~$g$ is piecewise regular and the jump set $J_g$ is sufficiently simple. 
A more general concept uses refinement indicators based on a~posteriori error 
estimates that bound the approximation error of an approximation
$u_h$, e.g., a continuous $P1$ approximation, by computable quantities. These
depend on an approximation $u_h$ of the discrete primal problem and an admissible 
vector field $q$ for the continuous dual problem, i.e., we have 
\[
\frac{\a}{2} \|u- u_h \|^2 \le I(u_h) - I(u) \le I(u_h) - D(q) = \eta_h^2(u_h,q),
\]
which follows from coercivity properties of the functional $I$ and the 
duality principle $I(u) \ge D(q)$ for every admissible vector field~$q$. 
A simple calculation shows that if $u_h$ is weakly differentiable and
$q$ satisfies $|q|\le 1$ in $\O$, the error estimator is given 
as a sum of local, nonnegative quantities, i.e., 
\[
\eta_h^2(u_h,q) = \int_\O |\nabla u_h| - \nabla u_h \cdot q \dv{x}
+ \frac{1}{2\a} \int_\O \big(\diver q - \a (u_h -g) \big)^2 \dv{x}.
\]
By using the partitioning of the domain $\O$ given by the triangulation $\cT_h$
we obtain refinement indicators $\eta_T^2(u_h,q)$ that are used to refine
elements $T\in \cT_h$. The reliable error estimator can only be efficient if
the vector field $q$ is a nearly optimal approximation of a dual solution~$z$.
To avoid the expensive solution of a discretization of the dual problem
we use the observation that an approximation can be obtained via post-processing
the Crouzeix--Raviart approximation, cf.~\cite{Bart20a}. In particular,
this provides a maximizing vector field for the discrete dual functional 
\[
D_h(z_h) = -\frac{1}{2\a} \| \diver z_h +  \a \Pi_{h}g \|^2 
+ \frac{\alpha}{2} \|\Pi_{h}g \|^2 - I_{K_1(0)}(\Pi_h z_h),
\]
defined on Raviart--Thomas vector fields $z_h \in \RT^0_N(\cT_h)$. For this
definition we have the discrete duality principle $I_h(u_h) \ge D_h(z_h)$.
However, the unit length constraint $|z|\le 1$ is only imposed at midpoints
so that the optimal $z_h$ is in general inadmissible in the continuous dual
functional $D$. Since only the midpoint values of $z_h$ and its elementwise 
constant divergence enter the error estimator it nevertheless appears to be a 
reasonable way to define error indicators $(\eta_T)_{T\in \cT_h}$ although
the error bound may fail to hold; obvious corrections of $z_h$ do not seem
to lead to efficient error estimators. Our numerical experiments based on
a related adaptive mesh refinement algorithm lead to improved experimental
convergence rates that are lower than the ones obtained for graded meshes. Our
explanation for this is that the graded meshes are optimal
for the $L^2$ error while the coercivity estimate leading to the 
a~posteriori error estimate controls a stronger error quantity. 

This article is organized as follows. In Section~\ref{sec:fe_spaces} we 
introduce the used notation and define the relevant finite element spaces. 
The example with a non-Lipschitz dual solution for the ROF model is investigated 
in Section~\ref{sec:sing_sol}. In Section~\ref{sec:graded} the graded grid approaches are 
devised and analyzed. In Section~\ref{sec:prim-dual} the primal-dual error estimator is
defined and the construction of an optimal discrete
dual vector field via discrete duality relations is shown.
Numerical experiments are presented in Section~\ref{sec:num_ex}.


\section{Notation and finite element spaces}\label{sec:fe_spaces}
Given a bounded Lipschitz domain $\O\subset \R^d$
we use standard notation for Sobolev spaces $W^{s,p}(\O)$ and abbreviate
the norm in $L^2(\O)$ by
\[
\|\cdot \| = \| \cdot\|_{L^2(\O)}.
\]
The space of functions of bounded variation $BV(\O)$ consists of all
$u\in L^1(\O)$ such that its total variation
\[
|Du|(\O) = \sup_{\phi \in C^\infty_c(\O;\R^d),\, |\phi|\le 1} - \int_\O u \diver \phi \dv{x}
\]
is bounded. We refer the reader to~\cite{Ambrosio2000,Attouch2006} 
for properties of the space and note here that distributional gradients
$Du$ of functions $u\in BV(\O)$  can be decomposed into a regular, a jump,
and a Cantor part via 
\[
Du = \nabla u \otimes \dv{x} - \jump{u n} \otimes \dv{s}|_{J_u} + C_u.
\]
Vector fields $w \in L^q(\O;\R^d)$ that have a weak divergence 
$\diver w \in L^q(\O)$ and whose normal component $w\cdot n$ 
vanishes on the boundary part $\GN = \GD \setminus \p\O$  are contained
in the set $W^q_N(\diver;\O)$, we omit the subindex if $\GN= \emptyset$.

In the following $(\cT_h)_{h>0}$ denotes a sequence of regular, i.e., uniformly
shape regular and conforming, triangulations of the bounded 
Lipschitz domain $\O \subset \R^d$. The set $\cS_h$ contains the sides 
of elements. The parameter $h$ refers to an average 
mesh-size $h \sim (|\O|/N)^{1/d}$, where $N$ is the number of vertices of $\cT_h$. 
We furthermore let $h_T = \diam(T)$ for $T\in \cT_h$ and 
\[
h_{\max} = \max_{T\in \cT_h} h_T, \quad h_{\min} = \min_{T\in \cT_h} h_T.
\]
On quasi-uniform triangulations all mesh-sizes are comparable, i.e., 
$h_{\min} \sim h_{\max} \sim h$. We let $\cP_k (T) $ 
denote the set of polynomials of maximal total degree $k$ on $T \in \cT_h$ 
and define the set
of discontinuous, elementwise polynomial functions or vector fields as
\begin{align*}
\mathcal{L}^k(\cT_h)^\ell= \lbrace w_h \in L^{\infty}(\O,\R^\ell) : 
\ w_h|_T \in P^k(T)^\ell \ \text{for all} \ T \in \cT_h \rbrace.
\end{align*}
Barycenters of elements and sides are denoted by $x_T$ for all $T\in \cT_h$
and $x_S$ for all $S\in \cS_h$. 
The $L^2$ projection onto piecewise constant functions or vector fields
is denoted by 
\begin{align*}
\Pi_{h}: L^1(\O,\R^\ell) \to \mathcal{L}^0(\cT_h)^\ell.
\end{align*}
Note that we have $v_h(x_T) = \Pi_h v_h|_T$ for all $T\in \cT_h$ and
$v_h\in \cL^1(\cT_h)$. 

We next collect some elementary properties of standard and nonstandard
finite elements and refer the reader 
to~\cite{Crouzeix-Raviart,Raviart-Thomas,40years-CR,BreSco08-book,Bartels2016}
for further details. The $P1$-finite element space is defined via
\[
\cS^1(\cT_h) = \big\{ v_h \in \cL^1(\cT_h): \ v_h \ \text{continuous in } \overline{\O} \big\}.
\]
A low order Crouzeix-Raviart finite element space is given by
\begin{align*}
\cS^{1,cr}(\cT_h) = \big\{ v_h \in \cL^1(\cT_h): 
\ v_h \ \text{continuous in} \ x_S \ \text{for all} \ S \in S_h \big\}.
\end{align*}
We let $\cS^1_D(\cT_h)$ and $\cS^{1,cr}_D(\cT_h)$ denote the subspaces of functions
satisfying boundary conditions in vertices or in barycenters of sides on $\GD$,
respectively. The elementwise gradient $\nabla_h v_h\in \cL^0(\cT_h)^d$ of a
function $v_h \in \cS^{1,cr}(\cT_h)$ is defined via   
\begin{align*}
(\nabla_h v_h)|_{T} = \nabla (v_h|_T)
\end{align*}
for all $T \in \cT_h$. A low order Raviart-Thomas-finite element space 
is given by
\[
\cR T^0(\cT_h) = \lbrace y_h \in W^1(\diver;\O) \  : \ y_h|_T = a_T + b_T  (x - x_T), 
\ \text{for all} \ T \in \cT_h \rbrace.
\]
Vector fields $q_h \in \cR T^0(\cT_h)$ belong to $W^1(\diver;\O)$ and
have continuous constant normal components 
on sides of elements, we set $\RT^0_N(\cT_h) = \RT^0(\cT_h)\cap W^1_N(\diver;\O)$.
The spaces $\cS^{1,cr}_D(\cT_h)$ and $\RT^0_N(\cT_h)$ are connected via
the integration-by-parts formula
\[
\int_\O v_h \diver q_h \dv{x} = -\int_\O \nabla_h v_h \cdot q_h \dv{x} 
\]
for all $v_h \in \cS^{1,cr}_D(\cT_h)$ and $q_h \in \RT^0_N(\cT_h)$.

Given a function $v\in BV(\O)$ there exists a sequence 
$(v_\veps)_{\veps>0}\subset W^{1,1}(\O)\cap C(\overline{\O})$ 
such that $v_\veps \to v$ in $L^1(\O)$
and $\|\nabla v_\veps\|_{L^1(\O)} \to |Dv|(\O)$, cf.~\cite{Ambrosio2000,Attouch2006}.
With this we define an extension of the nodal interpolation operator 
$\cI_h: C(\overline{\O}) \to \cS^1(\cT_h)$ via
\[
\cI_h v = \lim_{\veps \to 0} \cI_h v_\veps,
\]
possibly after selection of a subsequence. If $d=1$ then we have the
nodal interpolation estimates 
\begin{equation}\label{eq:interpol_est}
\|v - \cI_h v\|_{L^1(T)} \le c h_T^r \| v^{(r)}\|_{L^1(T)}
\end{equation}
for $T\in \cT_h$, $v\in W^{r,p}(T)$, and $1\le r\le 2$,
cf.~\cite{BreSco08-book}. Via a limit passage for $\veps \to 0$ it follows
that the estimate holds for $r=1$ and $v\in BV(T)$ and the right-hand side
$c h_T |Dv|(T)$. A straightforward calculation reveals the total-variation diminishing
property of the nodal interpolation operator and its extension to 
$BV(\O)$, i.e., 
\[
\|(\cI_h v) '\|_{L^1(\O)} \le |Dv|(\O)
\]
for $v\in BV(\O)$ if $\O\subset \R$. This estimates fails 
in higher dimensional situations, cf.~\cite{BaNoSa15}. As a consequence of
Jensen's inequality, the Crouzeix--Raviart quasi-interpolation operator
$\cJ_h^{cr}: W^{1,1}(\O) \to \cS^{1,cr}(\cT_h)$ satisfies the discrete variant
\[
\|\nabla_h \cJ_h^{cr} v\|_{L^1(\O)} \le \|\nabla v\|_{L^1(\O)}
\]
for every $v\in W^{1,1}(\O)$. Note that the left-hand side of the inequality
does not coincide with the total variation of $\cJ_h^{cr}v$ as jump terms are
excluded. From a Poincar\'e inequality we deduce that 
\begin{equation}\label{eq:cr_est_l1}
\|v-\cJ_h^{cr}v \|_{L^1(T)}  \le c h_T \|\nabla v\|_{L^1(T)}
\end{equation}
for all $T\in \cT_h$. The operator and estimates can be extended to
$v\in BV(\O)$.


\section{Irregular solution}\label{sec:sing_sol}
The construction of a function $g\in L^2(\O)$ that leads to dual solutions which
are not Lipschitz continuous uses an idea from \cite{tovey2020} and the function 
\[
g = \chi_{B_r^+} - \chi_{B_r^-}
\]
in a domain $\O$ that is asymmetric with respect to the $x_2$ axis. 
Without making use of the co-area formula we
show here that the example can be understood by using uniqueness of minimizers
for $I$ and resulting antisymmetry properties. Hence, the problem reduces to 
a simpler problem for which the solution can be explicitly derived. 
We impose Dirichlet conditions on $u$ which eliminates
explicit boundary conditions from the dual problem. 

\begin{proposition} \label{2 disc solution}
Assume that $\O\subset \R^2$ is symmetric with respect to the $x_2$ axis, i.e., 
$\O = \O^+ \cup L_0 \cup \O^-$, where $\O^-$ is the reflection of $\O^+$
along the $x_2$ axis and $L_0$ is part of this axis, and assume that $r>0$ is such that
$\overline{B_r^\pm} \subset \O$. Then for $g=\chi_{B_r^+} - \chi_{B_r^-}$ the minimizer 
$u\in BV(\O)\cap L^2(\O)$ for $I$ subject to $u|_{\p\O}= 0$ is given by 
\[
u = c_{r,\a} g,  \quad c_{r,\a} = \max\{1-2/(\alpha r),0\}.
\]
\end{proposition}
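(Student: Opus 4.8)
The plan is to combine the uniqueness of the minimizer with the reflection symmetry of the data in order to reduce the problem to a single calibrable ball on the half-domain $\O^+$. Since the fidelity term is strictly convex in $u$, the functional $I$ is strictly convex on $BV(\O)\cap L^2(\O)$, so the minimizer $u$ subject to $u|_{\p\O}=0$ is unique; it therefore suffices to exhibit a candidate that carries all the symmetries forced on the true minimizer and to verify its optimality via a duality argument.

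First I would record the antisymmetry. Let $R(x_1,x_2)=(-x_1,x_2)$ be the reflection across the $x_2$ axis; by assumption $R(\O)=\O$ and $R(B_r^+)=B_r^-$, hence $g\circ R=-g$. The map $v\mapsto -v\circ R$ leaves $|Dv|(\O)$ invariant, sends $\|g-v\|^2$ to $\|g-(-v\circ R)\|^2=\|g-v\|^2$ (substitute $x\mapsto Rx$ and use $g\circ R=-g$), and preserves the homogeneous Dirichlet condition on the symmetric boundary. So it maps minimizers to minimizers, and uniqueness gives $u=-u\circ R$ a.e. Consequently the traces of $u$ on $L_0$ from the two sides are negatives of each other, and splitting the total variation along the interface $L_0$ together with $g=\chi_{B_r^+}$ on $\O^+$ (since $B_r^-\cap\O^+=\emptyset$) yields the exact identity
\[
I(u)=2\Big(|Du|(\O^+)+\int_{L_0}|\mathrm{tr}\,u|\dv s+\frac{\a}{2}\|\chi_{B_r^+}-u\|_{L^2(\O^+)}^2\Big)=:2J(u|_{\O^+}).
\]
The same computation shows $I(w)=2J(w|_{\O^+})$ for every admissible antisymmetric $w$, and conversely that the antisymmetric extension of any admissible competitor on $\O^+$ is admissible for $I$; since the minimizer of $I$ is antisymmetric, minimizing $I$ is thus equivalent to minimizing the convex functional $J$ over $v\in BV(\O^+)$ with $\mathrm{tr}\,v=0$ on $\p\O\cap\p\O^+$.

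It remains to solve the reduced problem, i.e. to show that the unique minimizer of $J$ is $v^\ast=c_{r,\a}\chi_{B_r^+}$ with $c=c_{r,\a}=\max\{1-2/(\a r),0\}$; this is the classical explicit ROF solution for a ball, which I would verify directly by a calibration. Writing $x_0=(r,0)$, set
\[
z(x)=\begin{cases}-(x-x_0)/r, & |x-x_0|\le r,\\[0.3em]-r(x-x_0)/|x-x_0|^2, & |x-x_0|>r,\end{cases}
\]
rescaled by $\a r/2$ in the degenerate case $\a r\le 2$ where $v^\ast=0$. One checks that $z$ is continuous, $|z|\le1$ on $\O^+$, $-\diver z=(2/r)\chi_{B_r^+}=\a(1-c)\chi_{B_r^+}$ in $\O^+$, $z\cdot n=-1$ on $\p B_r^+$ so that $z\cdot Dv^\ast/|Dv^\ast|=1$ on the jump set, and $|z\cdot n|\le 1$ on $L_0$ while $\mathrm{tr}\,v^\ast=0$ there; these are exactly the optimality (subdifferential) conditions for $J$, so $v^\ast$ is optimal. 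Its antisymmetric extension is $c_{r,\a}(\chi_{B_r^+}-\chi_{B_r^-})=c_{r,\a}g$, which by the previous step is the minimizer $u$.

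The main obstacle I anticipate is the bookkeeping around the interface $L_0$: one must verify that the decomposition of the $BV$ total variation into the contributions on $\O^+$, on $\O^-$, and on the jump along $L_0$ is exact, that $B_r^+$ touching $\p\O^+$ at the single point $0$ (which is $\mathcal H^1$-null) spoils neither the additivity of the perimeters nor the calibration estimate, and that the optimality conditions for $J$ are formulated with the correct normal-trace (Anzellotti) pairing so that the identity $\int_{\O^+}z\cdot Dv^\ast=|Dv^\ast|(\O^+)$ holds with no spurious boundary contributions on $\p\O\cap\p\O^+$ or on $L_0$. The remaining ingredients — strict convexity, the change of variables, and the elementary divergence computations for $z$ — are routine.
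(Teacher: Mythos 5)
Your proposal is correct, and its first half coincides with the paper's argument: antisymmetry of the unique minimizer under $v\mapsto -v(-x_1,x_2)$, obtained from strict convexity of the fidelity term (the paper averages $u$ with its reflection $\hu$ and uses uniqueness; you note directly that $-u\circ R$ is again a minimizer — the same mechanism). Where you genuinely diverge is in the reduction to the half-domain and in how the reduced problem is solved. The paper invokes the jump-set containment $J_u\subset J_g$ to conclude that the antisymmetric $u$ vanishes on $L_0\setminus\{0\}$, so the problem collapses to the homogeneous-Dirichlet ROF problem on $\O^+$ with data $\chi_{B_r^+}$, whose explicit solution $c_{r,\a}\chi_{B_r^+}$ is then quoted from the literature. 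You avoid the jump-set theorem altogether: you keep the interface term $\int_{L_0}|\mathrm{tr}\,v|\dv{s}$ in an exactly split functional $J$ (legitimate by the standard $BV$ gluing/trace decomposition across the Lipschitz interface $L_0$, the touching point at the origin being $\mathcal{H}^1$-negligible), and you certify $c_{r,\a}\chi_{B_r^+}$ by an explicit calibration, checking $|z|\le 1$, $-\diver z=\a(1-c_{r,\a})\chi_{B_r^+}$, $z\cdot\nu=-1$ on $\p B_r^+$, and $|z\cdot n|\le 1$ on $L_0$ (with the rescaling by $\a r/2$ in the degenerate case $\a r\le 2$); your sign conventions and the divergence computations are consistent, and since $|z|\le 1$ everywhere the argument is insensitive to whether the Dirichlet condition is imposed strictly or in relaxed form. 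The trade-off: the paper's proof is shorter because it outsources both the interface behavior and the single-ball solution to known results, while yours is self-contained and, as a by-product, produces (after antisymmetric reflection) precisely the dual field whose behavior near the origin drives Corollary~\ref{2 disc dual relations} — at the price of the Anzellotti-pairing and $BV$-gluing bookkeeping you correctly flag, all of which is standard and goes through.
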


\begin{proof}
Given a minimizer $u\in BV(\O)\cap L^2(\O)$ 
we define its antisymmetric reflection by $\hu(x_1,x_2) = -u(-x_1,x_2)$ and note that by the antisymmetry
of $g$ we have $I(u) = I(\hu)$. By convexity of $I$ we have for $\tu = (u+\hu)/2$
that $I(\tu) \le I(u)$ and conclude by uniqueness that $u = \tu = \hu$, i.e.,
$u$ is antisymmetric in $x_1$. Since
the jump set $J_u$ of $u$ is contained in the jump set $J_g$ of $g$, 
cf. \cite{Jump-set-paper}, we find that $u$ is continuous with value~$0$ in 
$L_0\setminus \{0\}$. It thus suffices to consider the reduced minimization of $I^+$ 
on $\O^+$ with $g^+= \chi_{B_r^+}$ subject to $u^+|_{\p\O^+}=0$. The solution is given
by $u^+ = c_{r,\a} g^+$, cf., e.g., \cite{CCCNP10,Bartels2015}, which 
implies the assertion. 
\end{proof}

The explicit representation of the solution $u$ implies properties of
dual solutions. The idea for proving the failure of Lipschitz
continuity is illustrated in Figure~\ref{fig:sketch_dual}.
We incorporate the characterization of dual solutions from~\cite{Jump-set-paper}.

\begin{corollary} \label{2 disc dual relations}
Assume that $\a r>2$. 
Any dual solution $z\in W^2(\diver;\O)$ for the setting considered in  
Proposition~\ref{2 disc solution} satisfies $\diver z= \alpha (u-g)$, i.e., 
$\diver z = -(2/r) g$, and $z \in \p|Du|$, i.e., on $\p B_r^+ \cup \p B_r^-$ we have
\[
z = \mp \nu^\pm
\]
with the outer unit normals $\nu^{\pm}$ on $\p B_r^\pm$. In particular, 
$z$ is not $\theta$-H\"older continuous at $x=0$ for every $\theta >1/2$. 
\end{corollary}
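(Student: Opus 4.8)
The plan is to read off the two Fenchel extremality relations between the primal functional $I$ and the dual functional $D$, substitute the explicit primal minimizer from Proposition~\ref{2 disc solution}, and then turn the resulting boundary values of $z$ on the two circles into a quantitative H\"older blow-up at the origin.

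First I would use that for a primal minimizer $u$ and a dual maximizer $z$ strong duality holds, $I(u) = D(z)$, which is classical for the ROF functional, and that on admissible fields ($|z|\le 1$) the difference $I(u) - D(z)$ decomposes as the sum of a squared $L^2$-term $\frac{1}{2\a}\|\diver z - \a(u-g)\|^2$ and a total-variation defect $|Du|(\O) + \int_\O u\,\diver z\dv{x}$, the latter being nonnegative by the definition of $|Du|(\O)$ as a supremum over fields of modulus $\le 1$; a short algebraic check confirms this identity. Since both terms are nonnegative and sum to zero, they vanish separately: this yields the pointwise identity $\diver z = \a(u-g)$ a.e.\ in $\O$ and the saturation $-\int_\O u\,\diver z\dv{x} = |Du|(\O)$, i.e.\ the inclusion $z\in\p|Du|$. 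As $\a r>2$ we have $c_{r,\a} = 1-2/(\a r)>0$, so inserting $u = c_{r,\a}g$ gives $\diver z = \a(c_{r,\a}-1)g = -(2/r)g$.

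Next I would pin down $z$ on $J_u$. Because $J_u = J_g = \p B_r^+\cup\p B_r^-$ up to the touching point, the jump part of $Du$ on these circles is $c_{r,\a}$ times arclength measure, oriented along the direction of decrease of $u$: the inner normal $-\nu^+$ on $\p B_r^+$ and the outer normal $+\nu^-$ on $\p B_r^-$. The characterization of dual solutions along the jump set from~\cite{Jump-set-paper} endows $z$ with a well-defined trace there, and the saturation condition forces this trace to agree in direction with $Du/|Du|$; combined with $|z|\le 1$ this leaves only $z = -\nu^+$ on $\p B_r^+$ and $z = +\nu^-$ on $\p B_r^-$, i.e.\ $z = \mp\nu^\pm$ as claimed.

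Finally I would make the geometric estimate. Parametrizing $\p B_r^\pm$ near the origin by the vertical coordinate $s$, the point at height $s$ is $P^\pm(s) = (\pm s^2/(2r),s) + O(s^4)$ with outer unit normal $\nu^\pm(s) = (\mp 1, s/r) + O(s^2)$, hence $z(P^+(s)) = (1,-s/r)+O(s^2)$ and $z(P^-(s)) = (1,s/r)+O(s^2)$. Thus $|P^+(s)-P^-(s)| = s^2/r + O(s^4)$ whereas $|z(P^+(s))-z(P^-(s))| = 2s/r + O(s^2)$. If some representative of $z$ satisfied $|z(x)-z(0)|\le C|x|^{\theta}$ for a.e.\ $x$ near $0$, then by a Lebesgue-point argument its trace on $\p B_r^\pm$ would obey the same bound $\mathcal H^1$-a.e., and the triangle inequality would force $2s/r + O(s^2)\le C(|P^+(s)|^{\theta}+|P^-(s)|^{\theta})\le C's^{2\theta}$, i.e.\ $s^{1-2\theta}$ bounded as $s\to0$, which is impossible for $\theta>1/2$. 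The routine part is this last computation; the substantive point — and where \cite{Jump-set-paper} is essential — is that \emph{every} dual maximizer, not merely a conveniently constructed one, has a trace on the circles and that this trace is rigidly forced to be $\mp\nu^\pm$, i.e.\ upgrading the measure-theoretic inclusion $z\in\p|Du|$ to a pointwise boundary identity; a minor extra step is transferring the a.e.\ H\"older bound in $\O$ to the one-dimensional trace on $\p B_r^\pm$.
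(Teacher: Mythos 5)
Your first two paragraphs are fine and consistent with the paper: the paper likewise obtains $\diver z=\a(u-g)=-(2/r)g$ and the boundary identity $z=\mp\nu^\pm$ by invoking the characterization of dual solutions along the jump set from the cited reference (it does not rederive the Fenchel extremality relations, but your derivation is correct), and your parametrization of the two circles near the origin reproduces exactly the quantities the paper uses, namely $|x^+-x^-|\approx r\varphi^2$ and $|z(x^+)-z(x^-)|\approx 2\varphi$.

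The final step, however, contains a genuine error. First, the inequality chain is arithmetically wrong: your points satisfy $|P^\pm(s)|=\bigl(s^4/(4r^2)+s^2\bigr)^{1/2}\sim s$, not $\sim s^2$, so $C\bigl(|P^+(s)|^\theta+|P^-(s)|^\theta\bigr)\sim 2Cs^\theta$ and not $C's^{2\theta}$; with the correct exponent your comparison $2s/r\lesssim s^\theta$ yields a contradiction only for $\theta>1$, not $\theta>1/2$. Second, and more fundamentally, routing the estimate through $z(0)$ (pointwise H\"older at the origin, $|z(x)-z(0)|\le C|x|^\theta$) cannot give the claim for $\theta\in(1/2,1]$ at all: the traces from both circles converge to the common value $(1,0)$ as $\varphi\to0$, so with $z(0)=(1,0)$ one has $|z(x^\pm)-z(0)|\approx\varphi$ while $|x^\pm|\approx r\varphi$, and the pointwise quotient stays bounded for every $\theta\le1$. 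The statement (and the paper's proof) concerns the two-point H\"older quotient for pairs of points approaching the origin: apply the putative bound directly to the pair $x^+,x^-$, giving $2\varphi+O(\varphi^3)=|z(x^+)-z(x^-)|\le C|x^+-x^-|^\theta\approx C\,(r\varphi^2/\,2^{-1})^\theta\sim C'\varphi^{2\theta}$, which forces $\varphi^{1-2\theta}$ to remain bounded and hence fails for every $\theta>1/2$. Since you already computed both $|P^+(s)-P^-(s)|\approx s^2/r$ and $|z(P^+(s))-z(P^-(s))|\approx 2s/r$, the repair is immediate, but as written the concluding argument does not prove the corollary.
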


\begin{figure}[ht]
\scalebox{.9}{\input{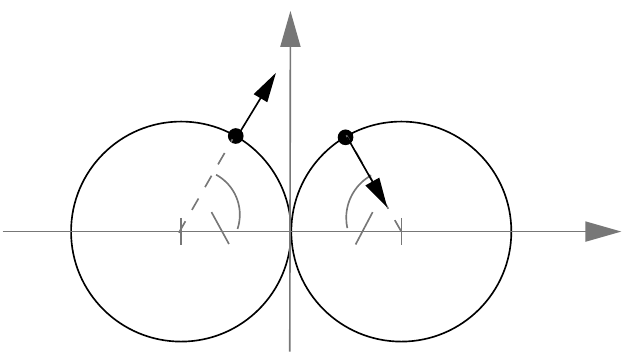_t}}
\caption{\label{fig:sketch_dual}
Failure of Lipschitz continuity of a dual solution~$z$ at $x=0$ resulting
from $|x^+-x^-| \approx r \varphi^2$ and $|z(x^+)-z(x^-)| \approx 2 \varphi$
for $\varphi \to 0$.}
\end{figure}

\begin{proof}
Given an angle $\varphi \in (0,\pi/2)$ we define points $x^\pm \in \p B_r^\pm$
by 
\[
x^+ = r(- \cos \varphi +1, \sin \varphi), \quad x^- = r(\cos \varphi - 1, \sin \varphi).
\]
Any dual solution $z$ satisfies 
\[
z(x^+) = (\cos \varphi, -\sin \varphi), \quad 
z(x^-) = (\cos \varphi, \sin \varphi).
\]
We show that the modulus of the $\theta$-difference quotient $L_\varphi$ for $z$ 
defined with $x^+$ and $x^-$ is unbounded as $\varphi \to 0$, i.e., we have 
\[
L_\varphi = \frac{|z(x^+) - z(x^-)|}{|x^+ - x^-|^\theta } 
=  \frac{\sin \varphi}{r^\theta (1- \cos \varphi )^\theta}.
\]
Using the Taylor expansions $\sin \varphi = \varphi + O(\varphi^3)$ and
$1-\cos \varphi = \varphi^2/2 + O(\varphi^4)$ or l'Hospital's rule 
we find that $L_\varphi \to \infty$ as $\varphi \to 0$ whenever $\theta>1/2$. 
\end{proof}


\section{Graded meshes}\label{sec:graded}
To improve convergence results for discontinuous solutions, we use a
graded grid approach. This capitalizes the precise
observations about the approximation of a discontinuity by linear finite
elements. 

\begin{proposition} \label{4: Problem_theorem}
For $\ell> 0$ let $\O =(-\ell,\ell) \subset \R$ and $u(x) = \sign(x) + v(x)$ 
with a function $v\in W^{1,2}(\O)$. Let $u_h^* = \Pi_h u \in \cS^1(\cT_h)$ be the 
$L^2$ projection of $u$ onto $\cS^1(\cT_h)$. Assume that the triangulation 
$\cT_h$ is symmetric with respect to $x=0$. Then we have 
\[
c_1 h_0^{1/2} \le \|u-u_h^* \| \le c_2 \big(h_0^{1/2} + h_{\max} \|v'\|\big),
\]
where $h_0$ is the length of the elements containing the origin. 
\end{proposition}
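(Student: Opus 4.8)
The plan is to obtain the upper bound from the best‑approximation property of the $L^2$ projection together with one explicit competitor, and the lower bound by first symmetrising the problem and then localising to the elements that meet the origin. For the upper bound, since $u_h^*$ minimises $\|u-w_h\|$ over $w_h\in\cS^1(\cT_h)$, it suffices to estimate $\|u-w_h\|$ for a convenient choice; I would take $w_h=s_h+\cI_h v$, where $s_h\in\cS^1(\cT_h)$ interpolates $\sign$ at all vertices (with $s_h(0)=0$ if $0$ is a vertex) and $\cI_h v$ is the nodal interpolant of $v$, which is well defined because $W^{1,2}(\O)\hookrightarrow C(\overline\O)$ in one dimension. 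The difference $\sign-s_h$ vanishes on every element on which $\sign$ is constant, hence is supported on the element(s) containing the origin, of total length $\lesssim h_0$, where $|\sign-s_h|\le 2$; this gives $\|\sign-s_h\|\le c\,h_0^{1/2}$. For the Sobolev part, the elementary one‑dimensional bound $\|v-\cI_h v\|_{L^\infty(T)}\le\|v'\|_{L^1(T)}\le h_T^{1/2}\|v'\|_{L^2(T)}$ yields $\|v-\cI_h v\|_{L^2(T)}\le c\,h_T\|v'\|_{L^2(T)}$, and summing over $T\in\cT_h$ gives $\|v-\cI_h v\|\le c\,h_{\max}\|v'\|$. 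Adding the two contributions and invoking the best‑approximation property gives $\|u-u_h^*\|\le\|u-w_h\|\le c_2\big(h_0^{1/2}+h_{\max}\|v'\|\big)$.

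For the lower bound I would first symmetrise. Since $\cT_h$ is symmetric about $x=0$, the space $\cS^1(\cT_h)$ is invariant under the reflection $Rf(x)=f(-x)$, so the $L^2$ projection onto $\cS^1(\cT_h)$ commutes with $R$; splitting $u$ and $u_h^*$ into odd and even parts, the odd part of $u_h^*$ equals the $L^2$ projection of the odd part of $u$, and by the $L^2$‑orthogonality of odd and even functions $\|u-u_h^*\|^2$ dominates the squared error of the odd parts. As $\sign$ is odd and the odd part of $v$ is again in $W^{1,2}(\O)$ with no larger norm, it suffices to prove the estimate when $v$ is odd, so that $v(0)=0$ and $u_h^*(0)=0$. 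Next I would localise: let $A=(-h_0,h_0)$ be the union of the two (by symmetry equally long) elements meeting the origin, so that $u_h^*|_A$ lies in the three‑dimensional space $V$ of functions that are continuous on $A$ and affine on each of the two halves. Then
\[
\|u-u_h^*\|\ \ge\ \|u-u_h^*\|_{L^2(A)}\ \ge\ \dist_{L^2(A)}(u,V)\ \ge\ \dist_{L^2(A)}(\sign,V)-\dist_{L^2(A)}(v,V),
\]
the last inequality using linearity of the $L^2$ projection onto $V$. The best approximation of $\sign$ in $V$ is odd, hence of the form $w(x)=cx$, and the one‑variable minimisation of $2\int_0^{h_0}(1-cx)^2\dv x$ gives $\dist_{L^2(A)}(\sign,V)^2=h_0/2$; on the other hand $\cI_h v|_A\in V$, so $\dist_{L^2(A)}(v,V)\le\|v-\cI_h v\|_{L^2(A)}\le c\,h_0\|v'\|_{L^2(A)}\le c\,h_0\|v'\|$. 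Combining these, $\|u-u_h^*\|\ge h_0^{1/2}\big((1/2)^{1/2}-c\,h_0^{1/2}\|v'\|\big)$.

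The hard part is this last step: the bound just displayed only delivers $\|u-u_h^*\|\ge c_1 h_0^{1/2}$ once $h_0^{1/2}\|v'\|$ is small, i.e.\ for sufficiently fine central elements, which is exactly the regime relevant to the convergence‑rate analysis in the sections that follow. For the remaining range $h_0\in[\,h_0^*,\ell\,]$ one has to argue separately; for instance, $h_0\mapsto\dist_{L^2(A)}(u,V)$ is a strictly positive continuous function of $h_0$ whose ratio to $h_0^{1/2}$ has a positive limit as $h_0\to0$, hence is bounded below on $(0,\ell]$ by a constant $c_1$ depending only on $v$ and $\ell$. The genuine difficulty is the interplay between the jump of $u$, which forces the $h_0^{1/2}$ rate, and the a priori uncontrolled size of $v'$ near the origin; the reduction to the odd part, which pins down $u_h^*(0)=0$ and the one‑sided traces $\pm1$ of $u$ at the origin, is the device that keeps the argument manageable.
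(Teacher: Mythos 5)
Your proof is correct, and its skeleton matches the paper's: an explicit competitor (interpolate $\sign$ away from the origin, nodally interpolate $v$) for the upper bound, and for the lower bound a reduction via the mesh symmetry to the behaviour of the jump on the central element(s), where the best affine approximation of $\sign$ costs exactly $O(h_0^{1/2})$ and the $W^{1,2}$ part only $O(h_0)$. The execution of the lower bound differs in an interesting way. The paper splits $v$ into antisymmetric and symmetric parts, restricts to $T_0=[0,h_0]$, and uses Pythagoras plus a Cauchy--Schwarz ``angle'' constant $\gamma_a<1$ together with $\|1+a\|_{L^2(0,h_0)}\ge c_a h_0^{1/2}$; the constants $\gamma_a,c_a$ depend on $v$ and, implicitly, on $h_0$, and their uniformity in $h_0$ is left tacit. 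You instead compute $\dist_{L^2(A)}(\sign,V)^2=h_0/2$ exactly for the three-dimensional local space $V$, treat $v$ as a perturbation via $\dist(u,V)\ge\dist(\sign,V)-\dist(v,V)$ (linearity of $I-P_V$), and explicitly address the non-asymptotic regime $h_0\in[h_0^*,\ell]$ by a positivity-and-continuity argument (positivity follows from the jump of $u$ at $0$, continuity in $h_0$ from the Gram-matrix representation of the projection). This buys a cleaner separation of the jump contribution from the $v$-contribution and makes visible, rather than hides, that $c_1$ depends on $v$ and $\ell$ -- the same dependence the paper's $\gamma_a,c_a$ carry. Two minor remarks: your odd/even symmetrization is actually not needed in your localization step, since you only use $u_h^*|_A\in V$ and the reflection symmetry of $V$ itself, so it could be dropped; and, like the paper, you treat the case where the origin is a mesh vertex -- the case of a central element containing $0$ in its interior is handled by the same computation on a single symmetric interval.
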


\begin{proof}
We  decompose $v=a+s$ into antisymmetric and symmetric parts 
$a,s\in W^{1,2}(\O)$ with $a(0)=0$. We then have by $L^2$ orthogonality that
\[
\|u-u_h^*\|^2 = \min_{a_h,s_h\in \cS^1(\cT_h)} \|\sign + a - a_h\|^2 + \|s-s_h\|^2,
\]
where $a_h,s_h$ are antisymmetric and symmetric, respectively. In
particular, it suffices to consider the interval $(0,\ell)$ and the
restriction $a_h(0)=0$. The upper bound for the approximation error 
is obtained by defining $a_h$ via the nodal values of $1+a$ except for 
the origin and choosing $s_h$ as the nodal interpolant of $s$. We then find
that 
\[\begin{split}
\|u-u_h^*\| &\le 2^{1/2} h_0^{1/2} + \|a-\cI_h a\| + \|s-\cI_h s\| \\
&\le 2^{1/2} h_0^{1/2} + c_{\cI} h_{\max} \big(\|a'\| + \|s'\|\big).
\end{split}\]
To verify the lower bound we consider the contribution from the 
element $T_0=[0,h_0]$ to the antisymmetric part. Since $a_h(0)=0$, we 
have $a_h(x) = c x$ on $T_0$ and find that
\[
\|\sign + a - a_h\|^2  \ge \|1 + a - c x\|_{L^2(0,h_0)}^2.
\]
For the optimal slope $c$ we have Pythagoras' identity 
\[
\|1+a-cx\|_{L^2(0,h_0)}^2  = \int_0^{h_0} (1+a)^2 \dv{x} - \|cx\|_{L^2(0,h_0)}^{2}. 
\]
Since $1+a$ and $x$ are linearly independent
there exists $0\le \gamma_a<1$ with 
\[
c \|x\|_{L^2(0,h_0)}^2 = \int_0^{h_0} (1+a) x \dv{x}
\le \gamma_a \|1+a \|_{L^2(0,h_0)} \|x\|_{L^2(0,h_0)}.
\]
The continuity of $a$ and $a(0)=0$ show that
$\|1+a\|_{L^2(0,h_0)} \ge c_a h_0^{1/2}$ which implies that we have
$\|1+a-cx\|_{L^2(0,h_0)}^2 \ge (1-\g_a^2) c_a^2 h_0$.
\end{proof}

Meshes with a grading towards a given point are obtained from affine
mappings of a graded grid of the reference interval to a macro element. 

\begin{definition}\label{def graded grid}
A {\em $\b$-graded grid} of the reference element $\hT = [0,1]$ is
for $J\in \N$ and $\b \ge 1$ given by the points
\[
0 = \xi_0 < \xi_1 < \dots < \xi_J = 1
\]
with $\xi_j = (j/J)^\b$. The length of the interval $[\xi_{j-1},\xi_j]$
is bounded by $J^{-\b} \b j^{\b-1}$, in particular, we have 
$h_{\min} = J^{-\b}$, $h_{\max} = \b J^{-1}$, and $h = J^{-1}$. 
\end{definition}

Proposition~\ref{4: Problem_theorem} implies that, for
a grading strength $\b= 2$ towards the origin, so that $h_0 = h^2$ and
$h_{\max} \le c h$, we obtain the linear rate 
\[
\|u- u_h^*\| \le c h.
\]
For the approximation of the ROF problem a similar result can be obtained. 

\begin{proposition}\label{prop:error_graded_1d}
Let $\O \subset \R$, $g\in L^\infty(\O)$, and assume that the 
minimizer $u\in BV(\O)\cap L^2(\O)$ of $I$ is a piecewise $W^{2,1}$ function. 
Then if $\cT_h$ is graded with strength $\b$ towards the jumps $J_u$ of~$u$
we have for the $P1$ finite element minimizer $u_h\in\cS^1_D(\cT_h)$ for $I$
that
\[
\|u-u_h\|^2 \le c \big(h^\b|Du|(\O) 
+ \b^2 h^2 \|u''\|_{L^1(\O\setminus J_u)} \big) \|g\|_{L^\infty(\O)}.
\]
\end{proposition}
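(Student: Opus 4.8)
The plan is to combine the coercivity/duality bound stated in the introduction, namely $\tfrac{\alpha}{2}\|u-u_h\|^2 \le I(u_h)-I(u) \le I(u_h)-D(q)$ for any admissible $q$, with the $1$D nodal interpolation estimates \eqref{eq:interpol_est} and the total-variation diminishing property of $\cI_h$. Since in one dimension $\cS^{1,cr}(\cT_h)$ and $\cS^1(\cT_h)$ produce the same minimizer and $\nabla_h$ is the ordinary derivative, I would work directly with the $P1$ space. First I would pick a competitor $u_h^\ast \in \cS^1_D(\cT_h)$ for the discrete problem: away from $J_u$ the solution is $W^{2,1}$, so I take $u_h^\ast$ to be a modified nodal interpolant that on each macro-interval between consecutive jump points agrees with the nodal interpolant of the smooth piece, and on the (single, by grading) element straddling a jump point interpolates the jump linearly. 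By minimality, $I_h(u_h)\le I_h(u_h^\ast)$, so it suffices to bound $I_h(u_h^\ast)-I(u)$ from above, plus the duality gap term, which I address next.

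For the upper bound on $I_h(u_h^\ast)-D(q)$ I would split it as the sum of the primal-energy excess $I_h(u_h^\ast)-I(u)$ and the dual gap $I(u)-D(q)$; for the latter I simply take $q$ to be an exact dual solution (which exists in $1$D for bounded $g$), so that term vanishes and $I(u)=D(q)$. Then I estimate $I_h(u_h^\ast)-I(u) = \big(\|(u_h^\ast)'\|_{L^1} - |Du|(\O)\big) + \tfrac{\alpha}{2}\big(\|\Pi_h(g-u_h^\ast)\|^2 - \|g-u\|^2\big)$. The TVD property gives $\|(u_h^\ast)'\|_{L^1(\O\setminus T_{\mathrm{jump}})}\le |Du|(\O\setminus J_u)$ on the smooth parts, while on the element straddling a jump the derivative of the linear interpolant integrates exactly to the jump height, so the total-variation term is controlled by $|Du|(\O)$ with no positive excess (or only a harmless one). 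For the fidelity term I write $\|\Pi_h(g-u_h^\ast)\|^2-\|g-u\|^2 \le \|g-u_h^\ast\|^2 - \|g-u\|^2 + \big(\|\Pi_h(g-u_h^\ast)\|^2 - \|g-u_h^\ast\|^2\big)$ and use $\|(g-u)-(g-u_h^\ast)\|\,\|(g-u)+(g-u_h^\ast)\| \le \|u-u_h^\ast\|\cdot C\|g\|_{L^\infty}$ together with the $\Pi_h$-projection error $\|(\id-\Pi_h)(g-u_h^\ast)\| \lesssim h_{\max}\|(g-u_h^\ast)'\|$ — here I would need that $g$ itself is piecewise smooth or at least that this projection term is $O(h\|g\|_\infty)$; if only $g\in L^\infty$ is assumed one absorbs this into the $h^\beta$-term via $\|(\id-\Pi_h)(g-u)\|$ bounds on the smooth pieces.

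The remaining ingredient is the interpolation error $\|u-u_h^\ast\|$ itself, which feeds back into the fidelity estimate. On each smooth macro-interval the estimate \eqref{eq:interpol_est} with $r=2$ gives $\|u-\cI_h u\|_{L^1(T)}\le c h_T^2 \|u''\|_{L^1(T)}$, and since $\cT_h$ is $\beta$-graded toward $J_u$ we have $h_{\max}\le c\beta h$ on these pieces, contributing the $\beta^2 h^2\|u''\|_{L^1(\O\setminus J_u)}$ term after converting $L^1$ to $L^2$ via $L^\infty$ bounds on $u-u_h^\ast$. On the element $T_0$ containing a jump point, $h_0 = h_{\min} = h^\beta$ by Definition~\ref{def graded grid}, and the contribution to $\|u-u_h^\ast\|^2$ from $T_0$ is $O(h_0)=O(h^\beta)$ exactly as in the lower-bound computation of Proposition~\ref{4: Problem_theorem}, scaled by the jump height which is bounded by $|Du|(\O)$. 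Collecting: the fidelity excess is bounded by $c\big(h^\beta|Du|(\O) + \beta^2 h^2\|u''\|_{L^1(\O\setminus J_u)}\big)^{1/2}\cdot C\|g\|_{L^\infty}^{1/2}$ times $\|u-u_h\|^{1/2}$-type factors, but more cleanly one shows directly $\tfrac{\alpha}{2}\|u-u_h\|^2 \le c\big(h^\beta|Du|(\O)+\beta^2 h^2\|u''\|_{L^1(\O\setminus J_u)}\big)\|g\|_{L^\infty(\O)}$ after bounding each mixed term by Young's inequality and absorbing. The main obstacle I anticipate is the careful treatment of the jump-straddling element: ensuring that the modified interpolant $u_h^\ast$ does not create spurious total variation beyond $|Du|(\O)$, and quantifying precisely the $h^\beta$ contribution of that element to both the $L^2$ error and the $\Pi_h$-fidelity term, since the sign/jump part is exactly where the $1/2$-power barrier lives and the grading $\beta$ must be bookkept consistently through Definition~\ref{def graded grid}.
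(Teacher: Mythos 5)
Your overall architecture is the same as the paper's: bound $\tfrac{\a}{2}\|u-u_h\|^2\le I(u_h)-I(u)\le I(\tu_h)-I(u)$ by discrete minimality with the (BV-extended) nodal interpolant $\tu_h=\cI_h u$ as competitor, kill the total-variation excess with the one-dimensional TVD property, and split the remaining fidelity term elementwise into a jump contribution of size $h_{\min}=h^\b$ and smooth contributions of size $h_{\max}^2\|u''\|_{L^1}$. The duality detour is superfluous (you immediately set the gap $I(u)-D(q)$ to zero, so you are back to $I(\tu_h)-I(u)$), but it is not wrong. However, two steps as written do not deliver the asserted bound. First, you estimate the fidelity difference by the $L^2\times L^2$ pairing $\|(g-u)-(g-u_h^\ast)\|\,\|(g-u)+(g-u_h^\ast)\|\le\|u-u_h^\ast\|\,C\|g\|_{L^\infty}$. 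On the element straddling a jump, $\|u-\cI_h u\|_{L^2(T_0)}\sim h_{\min}^{1/2}$ times the jump height, so this route only gives $\|u-u_h\|^2\lesssim h^{\b/2}\|g\|_{L^\infty}$, i.e.\ half the claimed rate; Young's inequality cannot rescue this because the right-hand side involves the known interpolant, not $u_h$, so there is nothing to absorb. The paper instead uses the binomial identity paired in $L^1\times L^\infty$,
\[
\int_\O (\tu_h-g)^2-(u-g)^2\dv{x}\le \|\tu_h-u\|_{L^1(\O)}\,\|\tu_h+u-2g\|_{L^\infty(\O)},
\]
together with the max-principle bounds $\|\cI_h u\|_{L^\infty}\le\|u\|_{L^\infty}\le\|g\|_{L^\infty}$ and the elementwise estimates $\|u-\cI_h u\|_{L^1(T)}\le c\,h_{\min}|Du|(T)$ on jump elements and $\le c\,h_{\max}^2\|u''\|_{L^1(T)}$ otherwise; this is what produces $h^\b|Du|(\O)$ rather than $h^{\b/2}$.

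Second, the proposition's $u_h$ minimizes $I$ itself over $\cS^1_D(\cT_h)$, so the detour through the projected discrete functional with $\Pi_h(g-u_h^\ast)$ is both unnecessary and, as you yourself note, not closable under the stated hypotheses: with only $g\in L^\infty(\O)$ the term $\|(\id-\Pi_h)(g-u_h^\ast)\|$ (equivalently the data oscillation $\|g-\Pi_h g\|$) has no rate, so it cannot be absorbed into the $h^\b$ term. Working directly with $I$ on the conforming $P1$ space, as the paper does, removes this obstacle entirely. With these two corrections (drop $\Pi_h$, replace the $L^2\times L^2$ Cauchy--Schwarz by the $L^1\times L^\infty$ H\"older pairing), your element-by-element bookkeeping with $h_{\min}=h^\b$ and $h_{\max}\le\b h$ yields exactly the asserted estimate.
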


\begin{proof}
By minimality of $u$ and coercivity properties of $I$, we find that
for every $\tu_h\in \cS^1_D(\cT_h)$ we have
\[
\frac{\a}{2} \|u-u_h\|^2 \le I(u_h) - I(u) \le I( \tu_h) - I(u).
\]
Via regularization we set $\tu_h  = \cI_h u$  and 
use the total-variation diminishing property of~$\cI_h$ 
to deduce with a binomial formula that
\[\begin{split}
\frac{\a}{2} \|u-u_h\|^2 &\le \int_\O (\tu_h - g)^2 - (u-g)^2 \dv{x} \\
&\le \|\tu_h-u\|_{L^1(\O)} \|\tu_h + u -2 g\|_{L^\infty(\O)}.
\end{split}\]
We note that $\|\tu_h\|_{L^\infty(\O)} \le \|u\|_{L^\infty(\O)} \le \|g\|_{L^\infty(\O)}$
and decompose the first factor into elementwise contributions. 
If $T\cap J_u \neq \emptyset$ we have 
\[
\|\tu_h - u\|_{L^1(T)} \le c h_{\min} |Du|(T).
\]
Otherwise, standard interpolation estimates show that 
\[
\|\tu_h - u\|_{L^1(T)} \le c h_{\max}^2 \|u''\|_{L^1(T)}.
\] 
A summation over the elements leads to the asserted estimate. 
\end{proof}

\begin{remark}
If the solution $u$ is piecewise linear then the estimate can be
improved to the convergence rate $O(h^{\b/2})$ for every $\b \ge 1$. 
\end{remark}

Because the total-variation diminishing property is not satisfied 
a generalization of the argument to a higher-dimensional setting requires additional
assumptions on a dual solution. A key difficulty is that the construction of a
discrete dual variable in the space $\RT^0_N(\cT_h)$ leads to a local violation of
the constraint $|z_h(x_T)|\le 1$ of order $O(h_T)$ independently of additional regularity
properties. For piecewise constant primal solutions we assume the strict inequality
$|z(x)|<1$ with linear decay away from discontinuities so that no violation occurs,
while in a neighborhood of the jump set we have a quadratic violation $O(h^2)$
on suitably graded meshes. 

\begin{proposition}
Let $g\in L^\infty(\O)$ and
assume that the primal solution $u\in BV(\O)\cap L^\infty(\O)$ is piecewise
constant with piecewise regular jump set $J_u$, and there exists a
dual solution $z\in W^1_N(\diver;\O)\cap W^{1,\infty}(\O;\R^d)$ with Lipschitz
constant $L\ge 0$ and such that there exists $\ell>0$ with
\[
|z(x)| \le 1 - \ell d(x)
\]
where $d(x)  = \inf_{y\in J_u} |x-y|$. Let $(\cT_h)_{h>0}$ be a sequence of 
quadratically graded triangulations towards $J_u$, i.e., with
$d(T) = \inf_{x\in T} d(x)$ we have for all $T\in \cT_h$ that 
\[
h_T \le c \begin{cases}
h \, d(T)^{1/2} & \text{if } d(T) \ge h^2, \\
h^2 & \text{otherwise}. 
\end{cases}
\] 
Then, the 
Crouzeix--Raviart finite element solution $u_h\in \cS^{1,cr}_D(\cT_h)$ 
of the ROF model satisfies
\[
\|\Pi_h(u-u_h)\| \le h |\log h|  \, M(\a,u,z,g,\s).
\]
\end{proposition}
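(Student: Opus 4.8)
The plan is to combine discrete stability of the Crouzeix--Raviart scheme with the discrete duality relation and thereby reduce the error to a computable primal--dual gap evaluated at a well-chosen pair. Since $u_h$ minimizes $I_h$ over $\cS^{1,cr}_D(\cT_h)$ and the fidelity term is $\tfrac{\a}{2}\|\Pi_h\cdot\|^2$-strongly convex, every $\widetilde u_h\in\cS^{1,cr}_D(\cT_h)$ and every $q_h\in\RT^0_N(\cT_h)$ with $|q_h(x_T)|\le1$ for all $T\in\cT_h$ satisfy
\[
\frac{\a}{2}\,\|\Pi_h(\widetilde u_h-u_h)\|^2\le I_h(\widetilde u_h)-I_h(u_h)\le I_h(\widetilde u_h)-D_h(q_h),
\]
the last step using $I_h(u_h)\ge D_h(q_h)$ (admissibility of $q_h$ means $|\Pi_h q_h(x_T)|=|q_h(x_T)|\le 1$). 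With $\|\Pi_h(u-u_h)\|\le\|\Pi_h(u-\widetilde u_h)\|+\|\Pi_h(\widetilde u_h-u_h)\|$ it then suffices to pick $\widetilde u_h=\cJ_h^{cr}u$ and $q_h$ a suitably corrected Raviart--Thomas interpolant $\cI_h^{\RT}z$ of the assumed dual solution and to estimate the gap and the interpolation error.

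Making the gap explicit via the elementwise integration-by-parts formula between $\cS^{1,cr}_D(\cT_h)$ and $\RT^0_N(\cT_h)$, and using that midpoint values of Raviart--Thomas fields equal elementwise means, one obtains
\[
I_h(\widetilde u_h)-D_h(q_h)=\int_\O\big(|\nablah\widetilde u_h|-\nablah\widetilde u_h\cdot q_h\big)\dv{x}+\frac1{2\a}\big\|\diver q_h-\a\,\Pi_h(\widetilde u_h-g)\big\|^2 .
\]
For the divergence term I would use $\diver\cI_h^{\RT}z=\Pi_h\diver z$ and the optimality relation $\diver z=\a(u-g)$, so that it collapses to $\tfrac{\a}{2}\|\Pi_h(u-\widetilde u_h)\|^2$ up to the controllably small defect produced by correcting $q_h$. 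Because $\widetilde u_h=\cJ_h^{cr}u$ and the quadratic grading forces any element with $\nablah\cJ_h^{cr}u|_T\neq0$ to satisfy $d(T)\lesssim h_T\le c\,h\,d(T)^{1/2}$, hence $d(T)\lesssim h^2$, both $u-\widetilde u_h$ and $\nablah\widetilde u_h$ live in a tube of width $O(h^2)$ around $J_u$; the Crouzeix--Raviart $L^1$-estimate, $\|u\|_{L^\infty(\O)}\le\|g\|_{L^\infty(\O)}$, and a bound on the $1$-dimensional measure of $J_u$ then control this term and $\|\Pi_h(u-\widetilde u_h)\|$ by $O(h)$.

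The essential term is the alignment term. I would estimate pointwise
\[
|\nablah\widetilde u_h|-\nablah\widetilde u_h\cdot q_h\le\tfrac12\,|\nablah\widetilde u_h|\big(1-|z|^2+|\widehat{\nablah\widetilde u_h}-z|^2\big)+|\nablah\widetilde u_h|\,|z-q_h|,
\]
and then use $|z(x)|\le1-\ell d(x)$ together with $z=\pm\nu$ on the piecewise regular jump set $J_u$ (from $z\in\p|Du|$): the unit vector $\widehat{\nablah\widetilde u_h}$ points across $J_u$, so it is $O(h_T+L\,d(x))$-close to $z$ on the tube elements, while $|z-\cI_h^{\RT}z|\le c\,h_T L$ by the Lipschitz bound. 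Summing over elements with $\|\nablah\cJ_h^{cr}u\|_{L^1(\O)}\le|Du|(\O)$ and the grading rule, the contributions organise into the $O(|\log h|)$ dyadic levels separating mesh-size $h$ from $h^2$, and their accumulation is what yields the logarithmic factor and the dependence $M(\a,u,z,g,\s)$; collecting all bounds and taking square roots gives $\|\Pi_h(u-u_h)\|\le h|\log h|\,M$.

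The main obstacle is the construction of a genuinely admissible $q_h\in\RT^0_N(\cT_h)$: the Raviart--Thomas interpolant of $z$ satisfies $|q_h(x_T)|\le1$ only up to an error of order $h_T L$, and near $J_u$ this is not absorbed by the slack $\ell d(x)$, which vanishes on $J_u$. Repairing this — either by the uniform rescaling $q_h\mapsto(1-c\,h^2L)\,\cI_h^{\RT}z$, which preserves $\RT^0_N$-conformity at the price of an $O(h^2)$ perturbation of the divergence and of the alignment term, or by a divergence-compatible local correction supported in the graded region — while still keeping the divergence residual and the alignment estimate under control and carrying the logarithmic loss through the layerwise summation, is where the technical weight lies. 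The remaining ingredients (the tube-measure bound, closeness of $\widehat{\nablah\cJ_h^{cr}u}$ to $\nu$ including at corners of $J_u$, and the Crouzeix--Raviart interpolation estimates) are routine for a piecewise regular jump set.
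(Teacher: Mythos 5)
Your outline is sound and shares the paper's skeleton: coercivity of $I_h$, the discrete duality $I_h(u_h)\ge D_h(q_h)$, a Crouzeix--Raviart quasi-interpolant $\tu_h$ of $u$, a Raviart--Thomas interpolant of $z$ made admissible by exactly the mechanism you identify (the slack $\ell d(T)$ absorbs the $O(h_T L)$ midpoint defect where $d(T)\gtrsim h^2$ thanks to the quadratic grading, and a global rescaling of size $1-O(h^2)$, the paper's factor $\g_h^{-1}$, handles the tube), and the concluding Jensen/H\"older step for $\|\Pi_h(u-\tu_h)\|$. Where you genuinely diverge is the treatment of the gap $I_h(\tu_h)-D_h(q_h)$: the paper never expands it into estimator form, but instead bounds $I_h(\tu_h)-I(u)$ by a quoted quasi-interpolation consistency estimate, bounds $D(z)-D_h(\tz_h)$ by the rescaling defect, and closes with strong duality $I(u)=D(z)$, so no pointwise alignment analysis is needed and the characterization $z=\pm\nu$ on $J_u$ is never used. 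Your route expands the gap into the alignment term plus divergence residual (your identity is correct, being the discrete analogue of the Section~\ref{sec:prim-dual} estimator) and estimates the alignment term locally; this is more self-contained and makes the mechanism visible, but the step you call routine is in fact the replacement for the quoted consistency bound: controlling the direction of $\nabla_h\cJ_h^{cr}u$ on jump elements requires the mean-value property $\int_T\nabla_h\cJ_h^{cr}u\dv{x}=\int_{\p T}u\,n\dv{s}\approx Du(T)$ of the side-average interpolant (for a generic quasi-interpolant it could fail), which for a piecewise regular jump set gives a directional deviation $O(h_T)$ from the normal; note also that only $|\widehat{\nabla_h\tu_h}-z|\lesssim h_T^{1/2}$ is needed, and that the upper bound $1-|z(x_T)|\lesssim L h_T$ comes from $|z|=1$ on $J_u$ plus Lipschitz continuity, not from the assumed slack inequality. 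Finally, your attribution of the logarithm to a dyadic layer summation does not match the paper: there $|\log h|$ enters only through generously chosen thresholds in the admissibility case distinction, and in your accounting no logarithmic factor is actually produced --- harmless, since the claimed estimate is an upper bound.
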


\begin{proof}
We follow the strategies of~\cite{Chamb,Bart20a}, let $g_h =\Pi_h g$, and note that 
there exists a Crouzeix--Raviart quasi-interpolant $\tu_h \in \cS_D^{1,cr}(\cT_h)$ of~$u$ 
such that
\[
I_h(\tu_h) \le I(u) + \frac{\a}{2}\|\Pi_h \tu_h - u\|_{L^1(\O)} 4 \|g\|_{L^\infty(\O)} 
- \frac{\a}{2} \|g-g_h\|^2.
\]
If $\g_h \ge 1$ is such that for the corrected 
Raviart-Thomas quasi-interpolant $\tz_h = \g_h^{-1} \cJ_{RT}z \in \RT^0_N(\cT_h)$
of $z$ we have $|\tz_h(x_T)|\le 1$ for all $T\in \cT_h$ then we have
\[
D_h(\tz_h) \ge D(z) - (1-\g_h^{-1}) \|g\| \|\diver z\| - \frac{\a}{2} \|g-g_h\|^2.
\]
Using the coercivity of $I_h$, the minimality of $u_h$, the discrete
duality relation $I_h(u_h) \ge D_h(\tz_h)$, and the strong duality relation
$I(u) = D(z)$, we find that
\[\begin{split}
\frac{\a}{2} \|\Pi_h(\tu_h-u_h)\|^2 & \le I_h(\tu_h)-I_h(u_h)  \le I_h(\tu_h) - D_h(\tz_h) \\
& \le \frac{\a}{2}\|\Pi_h \tu_h - u\|_{L^1(\O)} 4 \|g\|_{L^\infty(\O)}
+ (1-\g_h^{-1}) \|g\| \|\diver z\|.
\end{split}\]
To bound the correction factor $\g_h$ we estimate $\g_T = |\cJ_{\RT} z (x_T)|$
for every $T\in \cT_h$ using that $\g_T \le |z(x_T)| + c_{RT} h_T L$.
A case distinction shows that given $c_0>0$ there 
exist $c,h_0>0$ such that for $0<h<h_0$ we have 
\[
h_T \le \begin{cases}
c_0 d(T) & \mbox{if } d(T)\ge h^2 |\log h|^4, \\
c h^2 |\log h|^2  & \mbox{otherwise}.
\end{cases} 
\]
Let $c_0 \le \ell/(L c_{RT})$. If $d(T) > h^2 |\log h|^4$, then we have
\[
\g_T  \le 1- \ell d(T) + c_{RT} h_T L \le 1,
\]
and otherwise 
\[
\g_T \le 1 + c h^2 |\log h|^2 L.
\]
It thus suffices
to choose $\g_h \le 1+c h^2 |\log h|^2$ so that $(1-\g_h^{-1}) \le c h^2 |\log h|^2$.
To bound the term $\|\Pi_h \tu_h - u\|_{L^1(\O)}$,
we note that if $T\cap J_u \neq \emptyset$ we obtain with~\eqref{eq:cr_est_l1} that 
\[
\|u-\Pi_h \tu_h\|_{L^1(T)} \le \|u- \tu_h\|_{L^1(T)} + \|\tu_h-\Pi_h \tu_h\|_{L^1(T)}
\le  c h_T  |Du|(T).
\]
Otherwise, if $T\cap J_u = \emptyset$ we have that $u$ is constant
and $u=\Pi_h \tu_h$ on $T$. The estimate of the proposition follows from a combination
of the estimates and the triangle inequality, noting that by Jensen's and
H\"older's inequalities we have 
$\|\Pi_h(u-\tu_h)\|^2 \le \|u-\tu_h\|_{L^1(\O)} \|u-\tu_h\|_{L^\infty(\O)}$. 
\end{proof}

\begin{remark}
Under additional conditions on the approximations $(u_h)_{h>0}$,
e.g., that they are uniformly bounded and piecewise $W^{1,2}$, 
the estimate of the proposition also applies to the full $L^2$ error $\|u-u_h\|$.
\end{remark}

The assumptions of the proposition apply to certain 
settings with piecewise constant solutions. 

\begin{example}
If $g= \chi_{B_r(0)}$ for $r>0$ with $\overline{B_r(0)} \subset \O$ and 
if Dirichlet conditions on $u$ are imposed on $\GD = \p\O$, then we 
have $u=c_{r,\a} g$ and 
\[
z(x) = -c_{r,\a}' \begin{cases}
r^{-1} x, & |x|\le r, \\
r x/|x|^2, & |x| \ge r,
\end{cases}
\]
for every $x\in \O$, where $c_{r,\a}' = \min\{1,r\a/d\}$, cf.~\cite{CCCNP10,Bartels2015}.
\end{example}

A quadratic grading is the optimal
grading strength to locally refine a two-dimensional triangulation
towards a one-dimensional subset. 

\begin{example} \label{simple grid}
Let $\O = (0,\ell)^2 \subset \R^2$ with $\ell>0$ with initial
triangulation $\cT_0 = \{T_1,T_2\}$. 
We inductively define $\cT_{k+1}$ by first applying a red refinement
to all elements in $\cT_k$ that intersect the $x_2$-axis and then 
refining further elements to avoid hanging nodes by a red-green-blue
refinement strategy as, e.g., in~\cite[Algorithm 4.2]{Bartels2016}, 
cf.~Figure~\ref{fig: selfadapted triangs}.  
\end{example} 

\begin{figure}[h!]
\scalebox{.8}{\input{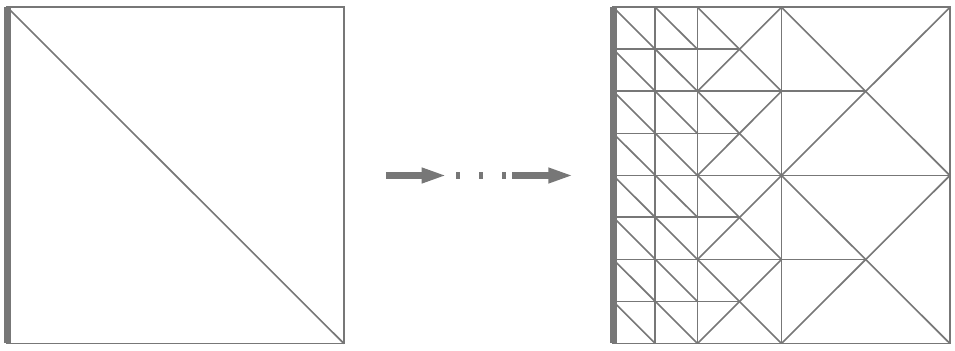_t}}
\caption{\label{fig: selfadapted triangs} 
Triangulation $\cT_3$  in Example~\ref{simple grid} 
obtained by refinements of $\cT_0$ 
with a grading towards the side $\{0\}\times [0,\ell]$.}
\end{figure} 

We define the (asymptotic) {\em grading strength} of a sequence of regular triangulations
$(\cT_k)_{k \ge 0}$ as the logarithmic relation of the minimal and average mesh-size,
i.e., 
\[
\b = \lim_{k\to \infty} \frac{\log(h_{k,\min})}{\log(h_k)}.
\]
We note that the speed of convergence depends on the diameter of $\O$, e.g,. 
for the triangulations defined in Example~\ref{simple grid} we have 
that $h_{k,\min}$ and $h_k$ are proportional to  $\ell$
which is irrelevant in the limit passage. 
For the generic setting of Example~\ref{simple grid} we identify a
quadratic grading strength. 

\begin{proposition}\label{beta convergence}
Let $(\cT_k)_{k=0,1,\dots}$ be a sequence of triangulations of $(0,\ell)^2$
such that triangles along the side $\{0\}\times [0,\ell]$ are $k$-times
refined with $h_T \sim h_{k,\min} \sim q^k h_{k,\max}$ and for 
triangles along the side $\{\ell\}\times [0,\ell]$ we have $h_T = h_{k,\max}$.
If $k^{-1} \log(h_{k,\max}) \to 0$ then the graduation strenth is $\b=2$. 
\end{proposition}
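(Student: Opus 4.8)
The plan is to count, at step $k$, the number of refinement levels along the graded side versus the total number of degrees of freedom, and then feed these counts into the definition of the grading strength $\b = \lim_k \log(h_{k,\min})/\log(h_k)$. First I would set up notation: write $H_k = h_{k,\max}$ for the mesh-size near the coarse side $\{\ell\}\times[0,\ell]$, so that by hypothesis $h_{k,\min} \sim q^k H_k$ with $0<q<1$, and the remaining task is to relate the \emph{average} mesh-size $h_k \sim (|\O|/N_k)^{1/2}$ to $H_k$ and $q$ through the vertex count $N_k$.

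The key step is the estimate of $N_k$. I would partition the triangulation into the nested strips $\{x_1 \in (2^{-j-1}, 2^{-j}]\}$ (or, more precisely, the strips where the local mesh-size is of order $q^j H_k$), observe that in the $j$-th such strip the element diameter is $\sim q^j H_k$ while the strip has width $\sim q^j \ell$ and height $\ell$, hence contains $O(\ell/(q^j H_k))$ elements; summing a geometric-type series over $j=0,\dots,k$ shows the refined region near $\{0\}\times[0,\ell]$ contributes $O(\ell/(q^k H_k))$ vertices, which dominates the $O((\ell/H_k)^2)$ vertices of the unrefined bulk once $q^k$ is small. Actually the bulk contributes $\sim (\ell/H_k)^2$ and the graded band contributes $\sim (\ell/H_k)\sum_{j=0}^k q^{-j}\sim (\ell/H_k) q^{-k}$, so $N_k \sim (\ell/H_k)^2 + (\ell/H_k) q^{-k}$. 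Then $h_k \sim (|\O|/N_k)^{1/2} \sim \ell/(\ell/H_k + (q^{-k})^{1/2})^{} $ up to harmless factors; taking logarithms, $\log h_k = \log \ell - \log\!\big((\ell/H_k) + H_k^{-1/2}\cdots\big)$ — I would simply bound $H_k \le h_k \le \max\{H_k, (q^k)^{-1/2}H_k\}$-type inequalities to conclude $\log h_k = \log H_k + O(k)$ more carefully: since $q^{-k}\to\infty$ faster than any polynomial in $H_k^{-1}$ under the hypothesis $k^{-1}\log H_k \to 0$, we get $N_k \sim (\ell/H_k) q^{-k}$ eventually, so $\log h_k = \tfrac12\log(|\O|/N_k) = \tfrac12\big(\log(\ell H_k) + k\log q\big) + O(1)$.

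Finally I would plug in: $\log h_{k,\min} = \log(q^k H_k) + O(1) = k\log q + \log H_k + O(1)$, while from the previous line $\log h_k = \tfrac12 k \log q + \tfrac12\log H_k + O(1)$. Dividing and using $k^{-1}\log H_k \to 0$ (so that the $\log H_k$ and $O(1)$ terms are $o(k)$ and drop out of the ratio), we obtain
\[
\b = \lim_{k\to\infty} \frac{k\log q + \log H_k + O(1)}{\tfrac12 k\log q + \tfrac12 \log H_k + O(1)} = \frac{\log q}{\tfrac12 \log q} = 2.
\]
The main obstacle I anticipate is the vertex-count estimate: one has to be careful that the refinement in Example~\ref{simple grid} is genuinely a geometric grading (each level halves the distance to the axis) and that the red–green–blue completion to remove hanging nodes only inflates $N_k$ by a constant factor per level, so that the geometric series $\sum q^{-j}$ is the correct model and the completion does not secretly change the exponent. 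Once $N_k$ is pinned down, the logarithmic limit is routine, and the hypothesis $k^{-1}\log h_{k,\max}\to 0$ is exactly what is needed to discard the $\log H_k$ contributions.
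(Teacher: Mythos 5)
Your argument is correct and follows essentially the same route as the paper's proof: decompose the graded region into strips of comparable local mesh-size, count $n_j\sim \ell q^{-j}/H_k$ elements per strip, sum the geometric series to get $N_k\sim (\ell/H_k)q^{-k}$ (the paper absorbs the bulk term you treat explicitly), deduce $h_k\sim N_k^{-1/2}$, and then compute the log-ratio, with the hypothesis $k^{-1}\log h_{k,\max}\to 0$ eliminating the $\log H_k$ and $O(1)$ contributions. The only blemish is the stated strip width $\sim q^j\ell$ (it should be $\sim q^j H_k$ to be consistent with the element count $O(\ell/(q^jH_k))$ you actually use), but either bookkeeping yields $\b=2$ since all $\log H_k$ terms are $o(k)$.
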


\begin{proof}
To determine the average
mesh-size we note that the refinement process defines after $k$-steps
a partitioning into stripes $S_j$, $j=0,1,\dots,k$, with $n_j \sim q^{-j} h_{\max}$ 
elements. A summation shows that $\cT_k$ contains $N_k \sim q^{-k} h_{\max}$ 
elements so that $h_k \sim q^{k/2}$ and
\[
\b = \lim_{k\to \infty} \frac{\log(c q^k h_{k,\max})}{\log(c' q^{k/2} h_{k,\max})}
= \lim_{k\to \infty} \frac{\log(c h_{k,\max}) + k \log(q)}{\log(c' h_{k,\max}) + (k/2) \log(q)} = 2,
\]
which proves the assertion. 
\end{proof}

\section{Primal-dual gap estimator via discrete duality}\label{sec:prim-dual}
We next devise a strategy that leads to an automatic and adaptive 
local mesh refinement algorithm. To illustrate the main ideas we consider a 
general convex minimization problem
\[
\tI(u) = \int_\O \phi(\nabla u) + \psi(x,u) \dv{x}
\]
defined on a Sobolev space $X=W^{1,p}_D(\O)$, $1<p<\infty$, or on $X=BV(\O)$,
whose dual is given by the maximization of 
\[
\tD(z) = -\int_\O \phi^*(z) + \psi^*(x,\diver z) \dv{x}
\]
on a space of vector fields $W=W^{p'}_N(\diver;\O)$. Here, $\phi: \R^d \to \Rinf$
and $\psi:\O \times \R \to \Rinf$ are convex functionals and  
$\phi^*$ and $\psi^*$ are their convex conjugates. The duality
relation $\tI(u) \ge \tD(z)$ in combination with 
coercivity properties of $\tI$ described by a functional $\s_{\tI}$
imply, for the minimizer $u\in X$ and arbitrary $v \in X$ and $q \in W$ that 
\[
\s_{\tI}^2(u,v) \le \tI(v)-\tI(u) \le \tI(v) - \tD(q) = \eta_h^2(v,q).
\]
If $v=u_h$ for an approximation
$u_h \in X$ of~$u$, then $\eta_h(u_h,q)$  provides a computable bound on 
the approximation error $\s(u,u_h)$ whenever an admissible $q$ is
explicitly given. We use the following extended result from~\cite{Bart15}.

\begin{proposition}\label{numerical_estimator}
Let $u \in BV(\O) \cap L^2(\O)$ be the minimizer for the ROF functional
$I$ and $u_h\in \cS^1_\cD(\cT_h)$ an approximation. We then have, for every
$q \in W^2_N(\diver;\O)$ with $|q| \leq 1$ in $\O$, that 
\[
\|u - u_h \| \le \big(2/\a\big)^{1/2} \,  \eta_h(u_h,q) + \|g -\Pi_h g\|,
\]
where 
\[
\eta_h^2(u_h,q) = \int_\O |\nabla u_h| - \nabla u_h \cdot \Pi_h q \dv{x}
+ \frac{1}{2\a} \int_\O \big(\diver q - \a (u_h -g_h) \big)^2 \dv{x}.
\]
\end{proposition}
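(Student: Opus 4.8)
The plan is to establish the inequality by combining the coercivity of the ROF functional $I$, a discrete-to-continuous comparison that handles the nonconformity, and the weak duality $I(u)\ge D(q)$. First I would use the quadratic growth of $I$ around its minimizer $u$: since $I$ is strongly convex in the $L^2$ sense with modulus $\a$, for any $v\in BV(\O)\cap L^2(\O)$ we have $\tfrac{\a}{2}\|u-v\|^2 \le I(v)-I(u)$. The natural choice would be $v=u_h$, but $u_h\in\cS^1_\cD(\cT_h)$ is weakly differentiable so $|Du_h|(\O)=\int_\O|\nabla u_h|\dv x$ and $I(u_h)$ makes sense directly; the only subtlety is that $I(u_h)$ uses $\|g-u_h\|^2$ whereas the estimator uses $\|g_h-u_h\|^2$ with $g_h=\Pi_h g$. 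I would absorb this discrepancy at the end via the triangle inequality, which is the source of the additive $\|g-\Pi_h g\|$ term.

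Next I would insert the dual functional. Since $q\in W^2_N(\diver;\O)$ with $|q|\le 1$ in $\O$ is admissible for the continuous dual problem, weak duality gives $I(u)\ge D(q)$, hence
\[
\tfrac{\a}{2}\|u-u_h\|^2 \le I(u_h)-I(u) \le I(u_h)-D(q).
\]
The core computation is then to rewrite $I(u_h)-D(q)$ as the stated $\eta_h^2(u_h,q)$ up to the data-oscillation correction. Writing out $D(q) = -\tfrac{1}{2\a}\|\diver q + \a g\|^2 + \tfrac{\a}{2}\|g\|^2$ (the indicator term vanishes since $|q|\le1$), and $I(u_h) = \int_\O|\nabla u_h|\dv x + \tfrac{\a}{2}\|g-u_h\|^2$, I would first replace the fidelity term using $g_h$: a short computation using that $\Pi_h$ is an $L^2$-projection and $u_h\mapsto\Pi_h u_h$ issues — actually $u_h$ is not piecewise constant, so I would instead simply keep $\|g-u_h\|$ and note $\|g-u_h\| \le \|g_h - u_h\| + \|g-g_h\|$ at the level of the final error bound rather than inside $I_h$. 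The key algebraic identity is the ``completion of squares'' linking $\tfrac{\a}{2}\|g-u_h\|^2 - \big(-\tfrac{1}{2\a}\|\diver q+\a g\|^2 + \tfrac{\a}{2}\|g\|^2\big)$ to $\tfrac{1}{2\a}\int_\O(\diver q - \a(u_h-g))^2\dv x$ plus a cross term $\int_\O (u_h - g)\diver q\,\dv x$; the latter is converted via integration by parts (here using $q\cdot n=0$ on $\GN$ and that $u_h$ vanishes on $\GD$) into $-\int_\O \nabla u_h\cdot q\,\dv x$, which then combines with $\int_\O|\nabla u_h|\dv x$ to yield the first integrand of $\eta_h^2$. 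Replacing $q$ by $\Pi_h q$ in the gradient term is harmless because $\nabla u_h$ is elementwise constant, so $\int_\O \nabla u_h\cdot q\,\dv x = \int_\O\nabla u_h\cdot\Pi_h q\,\dv x$; similarly $\diver q$ may be replaced by $\Pi_h(\diver q)$ in the squared term only if one is careful — I would instead keep $\diver q$ and note the stated $\eta_h^2$ uses $g_h$ in the divergence term, which again is the data-oscillation substitution, bounded using $\int_\O(\diver q-\a(u_h-g))^2 - \int_\O(\diver q - \a(u_h-g_h))^2 = \a^2(\|g\|^2 - \|g_h\|^2) - 2\a\int(g-g_h)(\diver q - \a u_h)$, which after completing squares and using $\Pi_h$-orthogonality collapses appropriately.

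The main obstacle, and the step I would be most careful with, is exactly this bookkeeping between $g$ and $g_h=\Pi_h g$: one must verify that all the differences introduced by replacing $g$ with $g_h$ in the divergence term and correspondingly the fidelity term collect into a perfect square so that the inequality $\tfrac{\a}{2}\|u-u_h\|^2 \le \eta_h^2(u_h,q) + (\text{something}) $ holds with the ``something'' controlled by $\|g-\Pi_h g\|$; since the final bound is on $\|u-u_h\|$ (not $\|\Pi_h(u-u_h)\|$) and has the additive form $(2/\a)^{1/2}\eta_h(u_h,q) + \|g-\Pi_h g\|$, I would take a square root of $\tfrac{\a}{2}\|u-u_h\|^2 \le \eta_h^2 + \tfrac{\a}{2}\|g-g_h\|^2$ and use $\sqrt{a+b}\le\sqrt a+\sqrt b$. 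Modulo these elementary but delicate manipulations, the proof is essentially the duality-gap argument of \cite{Bart15} adapted to the nonconforming fidelity term; I would cite that reference for the parts that carry over verbatim and only spell out the steps where the projection $\Pi_h g$ enters.
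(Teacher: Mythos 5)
Your opening steps are sound and essentially parallel to the paper: quadratic growth of $I$ at its minimizer, weak duality $I(u)\ge D(q)$ for admissible $q$, integration by parts using $u_h|_\GD=0$ and $q\cdot n=0$ on $\GN$, and the exact replacement $\int_\O\nabla u_h\cdot q\dv{x}=\int_\O\nabla u_h\cdot\Pi_h q\dv{x}$ since $\nabla u_h$ is elementwise constant. The genuine gap is in your final bookkeeping between $g$ and $g_h=\Pi_h g$. Your route gives
\[
\frac{\a}{2}\|u-u_h\|^2\le I(u_h)-D(q)
=\int_\O|\nabla u_h|-\nabla u_h\cdot\Pi_h q\dv{x}
+\frac{1}{2\a}\int_\O\big(\diver q-\a(u_h-g)\big)^2\dv{x},
\]
and when you trade $g$ for $g_h$ in the quadratic residual you obtain, besides $\tfrac{\a}{2}\|g-g_h\|^2$ (Pythagoras for the projection), the cross term $\int_\O(\diver q-\a u_h)(g-g_h)\dv{x}$. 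This term does not vanish --- neither $\diver q$ nor $u_h$ is elementwise constant, so orthogonality of $g-g_h$ to $\cL^0(\cT_h)$ does not apply --- and it has no sign. Hence your asserted intermediate inequality $\tfrac{\a}{2}\|u-u_h\|^2\le\eta_h^2(u_h,q)+\tfrac{\a}{2}\|g-g_h\|^2$ is not justified; nothing ``collects into a perfect square,'' and the subsequent $\sqrt{a+b}\le\sqrt{a}+\sqrt{b}$ step rests on an unproved claim.

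There are two ways to close this. The paper sidesteps the issue by perturbing the data rather than the estimator: it introduces $\tI$, the ROF functional with $g$ replaced by $g_h$, whose minimizer $\tu$ satisfies $\|u-\tu\|\le\|g-g_h\|$ by strong convexity of the fidelity term; the duality gap for the perturbed primal--dual pair is then exactly $\eta_h^2(u_h,q)$, and the oscillation term enters only through the triangle inequality $\|u-u_h\|\le\|u-\tu\|+\|\tu-u_h\|$, which produces precisely the stated constants. Alternatively, your unperturbed route can be completed without the false step by applying the $L^2$ triangle inequality inside the residual before taking square roots: with $G=\int_\O|\nabla u_h|-\nabla u_h\cdot\Pi_h q\dv{x}\ge0$ (a consequence of $|q|\le1$) and $B=\a^{-1}\|\diver q-\a(u_h-g_h)\|$, one has $\a^{-1}\|\diver q-\a(u_h-g)\|\le B+\|g-g_h\|$, and the elementary bound $\big(a+(B+C)^2\big)^{1/2}\le(a+B^2)^{1/2}+C$ for $a\ge0$, $B,C\ge0$ then yields $\|u-u_h\|\le(2/\a)^{1/2}\eta_h(u_h,q)+\|g-g_h\|$. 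Either repair gives the proposition; as written, your concluding step does not.
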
 

\begin{proof}
We define $g_h = \Pi_hg$ and let $\tI$ be the ROF functional with
$g$ replaced by $g_h$ whose minimizer we denote by $\tu\in BV(\O)\cap L^2(\O)$. 
By the strong convexity of the $L^2$ term in $I$ we find that
$\|u-\tu\| \le \|g-g_h\|$, cf., e.g.,~\cite{Bartels2015}. 
We apply the error estimate to $\tI$ and obtain that
\[
\frac{\a}{2} \|\tu- u_h \|^2 \le 
 \tI(u_h) - \tD(q) - \int_\O \nabla u_h \cdot q \dv{x} - \int_\O u_h \diver q \dv{x}.
\]
A straightforward calculation, the fact that $\nabla u_h$ is elementwise
constant, and the triangle inequality lead to the formula for $\eta_h(u_h,q)$.
\end{proof}

\begin{remark}
If the estimate is derived for a Crouzeix--Raviart approximation 
$u_h\in \cS^{1,cr}_D(\cT_h)$, then jumps across sides occur on the
right-hand side.
\end{remark}

The optimal estimator $\eta_h(u_h,q)$ requires an exact solution of
the dual problem or a numerical approximation of sufficient accuracy, 
cf.~\cite{BarMil20}.
Since the numerical solution of the dual problem is computationally 
expensive, we aim at the construction of a nearly optimal approximation
at a computational cost that is comparable to the numerical solution 
of the discretized primal problem. For this we use a reconstruction
of a discrete dual solution from the Crouzeix--Raviart approximation
of the primal problem from~\cite{Bart20a}.

\begin{proposition}[{\cite[Proposition 3.1]{Bart20a}}]
Let $\tI_h$ and $\tD_h$ be defined on $\cS^{1,\CR}_D(\cT_h)$ and $\RT^0_N(\cT_h)$ 
with $\psi_h(\cdot,a) = \Pi_h \psi (\cdot,a)$ for all $a\in \R$ via
\[\begin{split}
\tI_h(u_h) &= \int_\O \phi(\nabla_h u_h) + \psi_h(x,\Pi_hu_h) \dv{x}, \\
\tD_h(z_h) &= -\int_\O \phi^*(\Pi_h z_h) + \psi_h^*(x,\diver z_h) \dv{x}.
\end{split}\]
We then have the duality relation $\tI_h(u_h) \ge \tD_h(z_h)$.
If $s\mapsto \phi(s)$ and $a\mapsto \psi_h(x,a)$ are continuously 
differentiable and if $u_h$ is minimal for $\tI_h$ then a maximizing element $z_h$
for $\tD_h$ is given by 
\[
z_h = \phi'(\nabla_h u_h) + d^{-1} \, \psi_h'(\cdot ,\Pi_h u_h) (\cdot  - x_\cT),
\]
where $x_\cT = \Pi_h \id$, and strong duality $\tI_h(u_h) = \tD_h(z_h)$ applies. 
\end{proposition}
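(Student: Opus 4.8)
The plan is to verify the three asserted facts in turn: the duality relation $\tI_h(u_h)\ge\tD_h(z_h)$, the explicit formula for the maximizer, and the strong duality $\tI_h(u_h)=\tD_h(z_h)$. For the first claim, I would start from the Fenchel--Young inequality applied pointwise, namely $\phi(\nabla_h u_h) + \phi^*(\Pi_h z_h) \ge \nabla_h u_h \cdot \Pi_h z_h$ on each element $T\in\cT_h$, and similarly $\psi_h(x,\Pi_h u_h) + \psi_h^*(x,\diver z_h) \ge \Pi_h u_h \,\diver z_h$. Since $\nabla_h u_h$ is elementwise constant and $z_h\in\RT^0_N(\cT_h)$ has $\Pi_h z_h(x_T) = z_h(x_T)$, the product $\nabla_h u_h\cdot \Pi_h z_h$ integrates correctly; likewise $\diver z_h$ is elementwise constant and $\Pi_h u_h$ is its elementwise mean, so $\int_\O \Pi_h u_h\diver z_h\dv{x} = \int_\O u_h\diver z_h\dv{x}$. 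Adding the two inequalities and integrating over $\O$, the cross terms combine to $\int_\O \nabla_h u_h\cdot z_h + u_h\diver z_h\dv{x}$, which vanishes by the integration-by-parts formula for the pair $\cS^{1,\CR}_D(\cT_h)$ and $\RT^0_N(\cT_h)$ recorded in Section~\ref{sec:fe_spaces}. This yields $\tI_h(u_h) + (-\tD_h(z_h)) \ge 0$, i.e., the weak duality relation, for \emph{all} admissible pairs.

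For the second and third claims I would proceed simultaneously, since strong duality holds precisely when the Fenchel--Young inequalities above are equalities. The optimality condition for $u_h$ minimizing $\tI_h$ over $\cS^{1,\CR}_D(\cT_h)$ reads: for every test function $v_h\in\cS^{1,\CR}_D(\cT_h)$,
\[
\int_\O \phi'(\nabla_h u_h)\cdot\nabla_h v_h + \psi_h'(x,\Pi_h u_h)\,\Pi_h v_h \dv{x} = 0.
\]
The candidate $z_h = \phi'(\nabla_h u_h) + d^{-1}\psi_h'(\cdot,\Pi_h u_h)(\cdot - x_\cT)$ is elementwise of Raviart--Thomas form $a_T + b_T(x - x_T)$ with $a_T = \phi'(\nabla_h u_h)|_T$ and $b_T = d^{-1}\psi_h'(x,\Pi_h u_h)|_T$; its elementwise divergence is exactly $\psi_h'(\cdot,\Pi_h u_h)$, and its value at $x_T$ is $\phi'(\nabla_h u_h)|_T$. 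The key point to check is that $z_h$ actually lies in $\RT^0_N(\cT_h)$, i.e., that its normal components are continuous across interior sides and vanish on $\GN$; this is where the Euler--Lagrange equation is used. Testing with Crouzeix--Raviart basis functions (which are $\pm 1$ at one side midpoint and $0$ at the others), rewriting the volume integral in the optimality condition via integration by parts over each element, and matching side contributions should force the jumps of $z_h\cdot n$ at side midpoints to vanish; since the normal components are affine along each side and constant (indeed they are, because $z_h\cdot n$ on a side is determined by its midpoint value for $\RT^0$), midpoint continuity gives full continuity. Once $z_h\in\RT^0_N(\cT_h)$ is established, Fenchel--Young equality holds pointwise by construction: $\Pi_h z_h(x_T) = \phi'(\nabla_h u_h)|_T$ pairs with $\nabla_h u_h|_T$ at equality, and $\diver z_h|_T = \psi_h'(x_T,\Pi_h u_h|_T)$ pairs with $\Pi_h u_h|_T$ at equality (using differentiability of $\phi$ and of $a\mapsto\psi_h(x,a)$). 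Summing the equalities and again invoking the integration-by-parts formula gives $\tI_h(u_h) = \tD_h(z_h)$, which simultaneously shows $z_h$ maximizes $\tD_h$ by the weak duality of the first part.

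The main obstacle I anticipate is the bookkeeping in showing $z_h\in\RT^0_N(\cT_h)$, that is, translating the Crouzeix--Raviart Euler--Lagrange equation into the normal-trace continuity of the reconstructed vector field. One must carefully track that the volume term $\int_T \psi_h'(x,\Pi_h u_h)\,\Pi_h v_h\dv{x}$, after writing $\Pi_h v_h$ in terms of side-midpoint values and using that $\int_T (x - x_T)\dv{x} = 0$, produces exactly the boundary flux contributions that cancel the $\phi'(\nabla_h u_h)\cdot\nabla_h v_h$ term side-by-side. Since the statement is quoted verbatim from \cite[Proposition 3.1]{Bart20a}, the cleanest route is to refer to that proof; if a self-contained argument is wanted, the only genuinely delicate step is the normal-continuity verification, and everything else is the pointwise Fenchel--Young mechanism combined with the discrete integration-by-parts identity already available in Section~\ref{sec:fe_spaces}.
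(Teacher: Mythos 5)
Your proposal is correct and follows essentially the argument of the cited reference (Proposition 3.1 of the paper by Bartels invoked here without reproducing its proof): weak duality from the pointwise Fenchel--Young inequalities for $(\phi,\phi^*)$ and $(\psi_h,\psi_h^*)$ combined with the discrete integration-by-parts formula for the pair $\cS^{1,cr}_D(\cT_h)$--$\RT^0_N(\cT_h)$, and strong duality by checking that the reconstructed field is admissible (normal-trace continuity at side midpoints forced by the Euler--Lagrange equations tested with Crouzeix--Raviart side basis functions, using $\int_T(x-x_\cT)\dv{x}=0$) and attains Fenchel--Young equality elementwise. The only imprecision is your justification that $z_h\cdot n$ is constant on each side: this should rest on the geometric fact that $(x-x_T)\cdot n$ is constant on the hyperplane containing the side, not on $z_h$ already being a Raviart--Thomas field, but this does not affect the correctness of the argument.
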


To apply the result to the discretized ROF functional, we consider for $\veps>0$
the regularization $|a|_\veps = \big(|a|^2+ \veps^2)^{1/2}$ 
of the non-differentiable modulus.
We then obtain the reconstruction $z_h \in \RT^0_N(\cT_h)$ given by
\[
z_h = \frac{\nabla u_h}{|\nabla_h u_h|_\veps} 
+ \frac{\a}{d} \, \Pi_h (u_h - g) (\cdot - x_\cT).
\]
The vector field $z_h$ satisfies $|z_h(x_T)| \le 1$ for all $T\in \cT_h$,
but in general not $|z_h(x)|\le 1$ for almost every $x\in \O$. We have
for every $T\in \cT_h$ that
\[
\big|z_h|_T\big| \le 1 + \frac{\a}{d} |(u_h-g_h)(x_T)| \frac{d}{d+1} h_T = \g_T.
\]
The globally re-scaled vector field $\hz_h = (\max_{T\in \cT_h} \g_T)^{-1} z_h$ 
satisfies $|\hz_h|\le 1$ but does not lead to an efficient 
error estimator. Our experiments reported below indicate that also the 
scaling $\tz_h = \tg_h^{-1} z_h$  with a continuous function $\tg_h$
satisfying $\tg_h|_T \ge \g_T$ for all $T\in \cT_h$ does not lead to an 
efficient estimator. 

\begin{remark}\label{rem:stronger_quant}
The error estimator $\eta_h(u_h,q)$ controls the approximation error
$\s_{\tI}(u,u_h)$ defined by the maximal coercivity of the functional~$\tI$. 
For the ROF functional $I$ the scaled $L^2$ norm is a lower bound
for this quantity and the error estimator controls a stronger error
quantity. For the regularized ROF functional $I^\veps$ 
and a minimizers $u$, i.e., $\d I^\veps(u) =0$, a Taylor expansion 
formally yields that
\[
I^\veps(u_h) 
= I^\veps(u) + 
\int_\O \phi_\veps''(\nabla \xi)[\nabla(u-u_h),\nabla(u-u_h)] \dv{x} + \frac{\a}{2} \|u-u_h\|^2,
\]
where the convex function $\phi_\veps = |\cdot|_\veps$ has a positive definite 
Hessian, cf.~\cite{Veser}. 
\end{remark}


\section{Numerical experiments}\label{sec:num_ex}
We verify in this section the theoretical results and investigate the
performance of numerical methods beyond their validity. Our computations
are based on the use of the regularized ROF functional 
\[
I^\veps(u) = \int_\O |\nabla u|_\veps \dv{x} + \frac{\a}{2} \|u-g\|^2
\]
with the regularized modulus $|a|_\veps = (|a|^2 + \veps^2)^{1/2}$ for $a \in \R^d$ 
and $\veps > 0$. Owing to the bounds $0 \leq |a|_\veps - |a| \leq \veps $ 
the error estimates and identified convergence rates remain valid 
provided that $\veps = O(h^\s)$ with $\s=1$ or $\s=2$ to obtain an $L^2$ 
error $O(h^{\s/2})$ on uniform and locally refined meshes. 
The iterative minimization of $I^\veps$ was
realized with the unconditionally stable semi-implicit $L^2$ gradient flow
from~\cite{BaDiNo18}. We always use the step-size $\tau=1$ but
different stopping criteria $\|u^k - u^{k-1}\|\le \veps_\stop$. 

\subsection{Irregular solution} \label{subsec: num_ex_Tovey}
We investigate the numerical approximation of the example
from Section~\ref{sec:sing_sol} to verify whether the 
failure of Lipschitz continuity of dual solutions affects the
convergence rate $O(h^{1/2})$ for the 
Crouzeix--Raviart method on uniform triangulations. We use a coordinate
transformation to avoid superconvergence phenomena related
to mesh symmetries.

\begin{example}[Non-Lipschitz dual]\label{example Tovey}
Let $\O = (-1,1)^2 \subset \R^2$, $\a=10$, and $\widetilde{g} = \chi_{B_r^+} - \chi_{B_r^-}$  
for $r\in \{0.4,5.0\}$ and $g= \widetilde{g} \circ \Phi$, where $\Phi(x) = Qx+b$
realizes a rotation by $\phi = 70^\circ$ and shift by $b=(0.1,0)^\transp$.
Dirichlet conditions $u_D = u|_{\p\O}$ from the solution 
$u=c_{r,\a} g$, $c_{r,\a} = 1-2/(r\a)$, are imposed.
\end{example}
 
The experimental convergence rates shown in Figure~\ref{fig:Tovey_conv} 
are obtained on $k$-times red-refined triangulations $\cT_k$ 
of an initial triangulation  $\cT_0$ with four elements. We have
$h_k = 2^{-k}$ and use $\veps_\stop = h_k/20$.
 The optimal convergence rate $O(h^{1/2})$ is observed
for both choices of $r=0.4$ and $r=5.0$, despite the lack of a Lipschitz
continuous dual solution. Also the scaling of the problem towards 
the singular point obtained by increasing
the radius $r$ does not affect the experimental convergence rates.
In Figure~\ref{fig:Tovey_pic} the numerical solution $u_h$ on the
triangulation $\cT_6$ and its projection onto elementwise constant
functions are displayed for the parameter $r=0.4$. Large gradients
occur near the origin, the midpoint values do not, however, show 
artifacts.  

\begin{figure}[p]
\includegraphics[width=9.6cm]{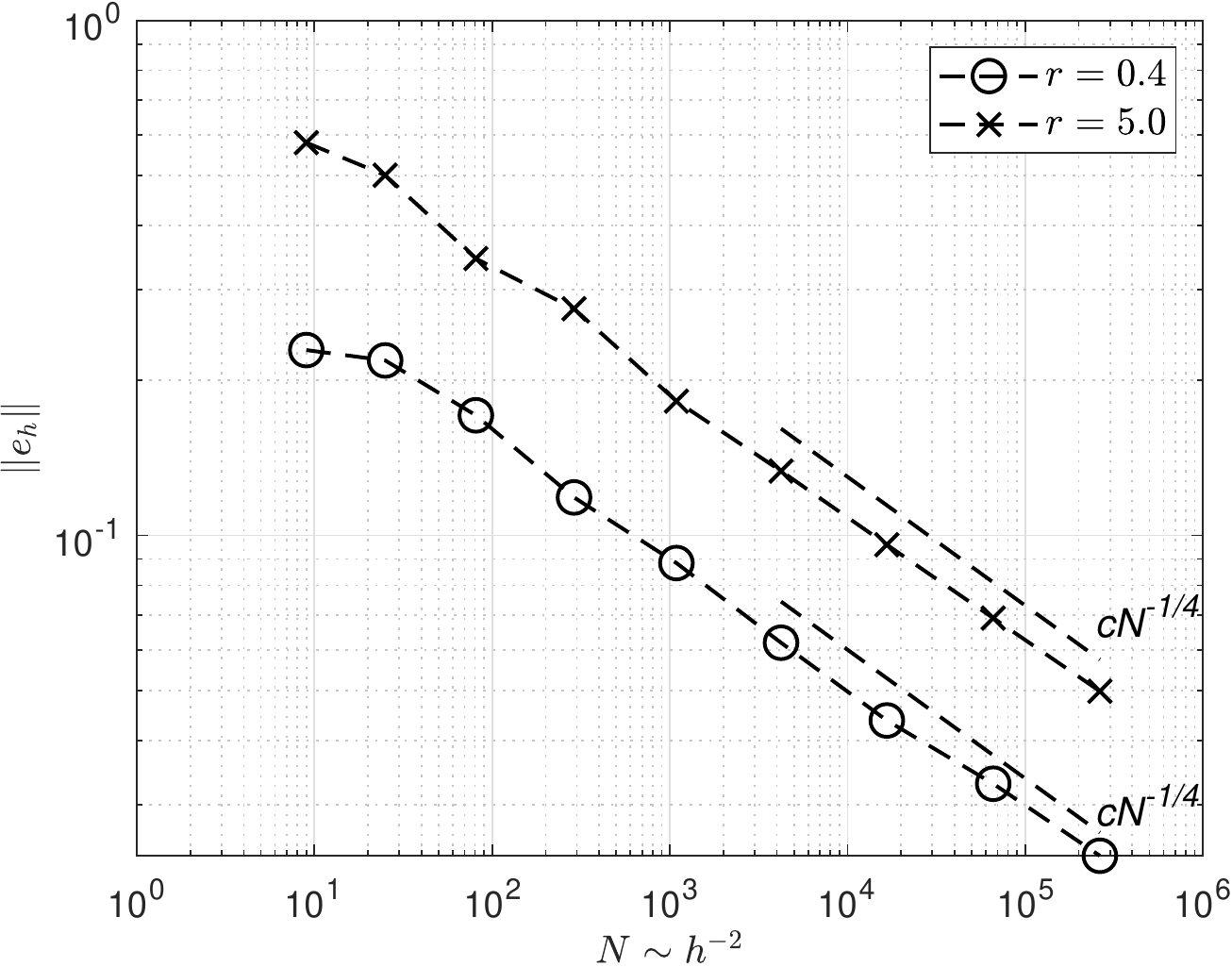} 
\caption{\label{fig:Tovey_conv} 
Experimental convergence rates $h^{1/2} \sim N^{-1/4}$ 
for Crouzeix--Raviart finite element approximations of the ROF model
on sequences of uniform triangulations for a solution 
with non-Lipschitz continuous dual solutions defined in 
Example~\ref{example Tovey} and different magnifications of 
the irregular region at the origin.}
\includegraphics[width = 6.2cm]{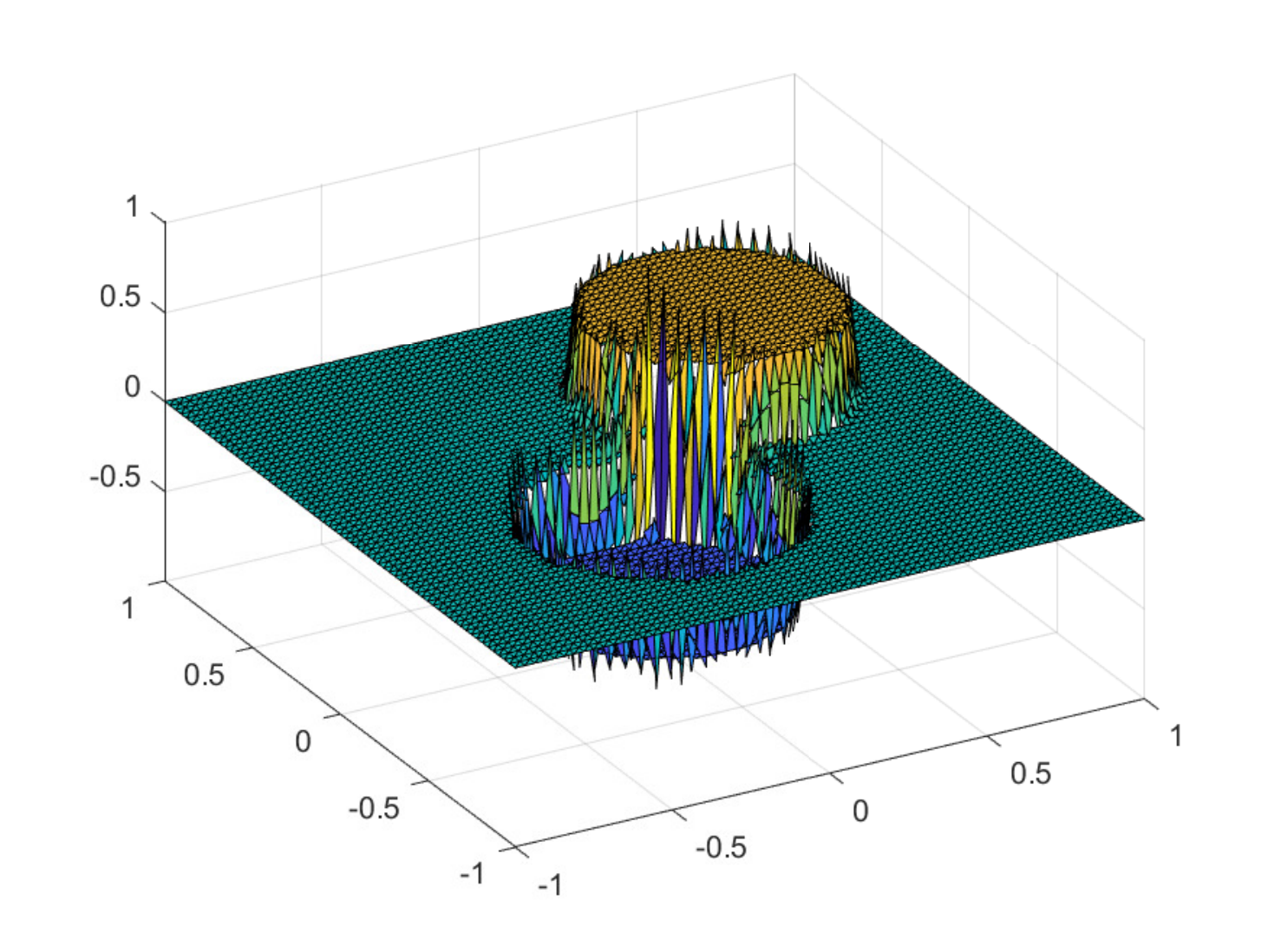}
\includegraphics[width = 6.2cm]{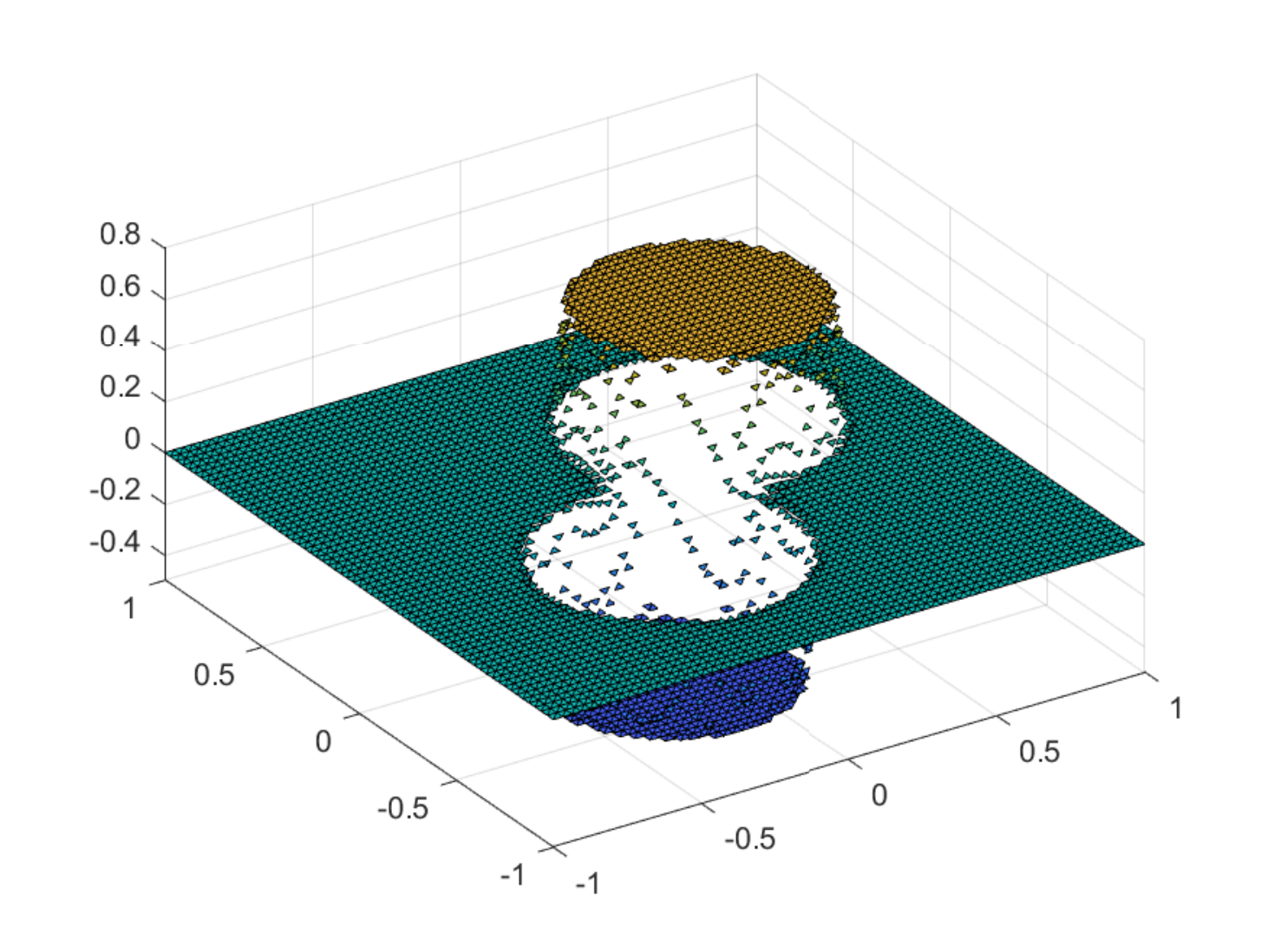}
\caption{\label{fig:Tovey_pic}
Numerical solution $u_h \in \cS^{1,cr}(\cT_6)$ (left) and its 
projection $\Pi_h u_h$ (right) in Example~\ref{example Tovey} for
$r=0.4$. Large discrete gradients occur near the origin where
dual solutions are not Lipschitz continuous.}
\end{figure}

\subsection{Mesh grading in one dimension}
We next confirm our theoretical findings for the use 
graded meshes for the approximation of the ROF model in 
one-dimensional settings. The problem specification leads
to a multiple of the sign function as exact solution. 

\begin{example}[1D sign function] \label{one disc 1d}
Let $\O = (-1,1)$, $\a = 10$, and define $g(x)= \sign(x)$.
The minimizer for the ROF functional subject to 
Dirichlet boundary conditions is given by  
$u = c_{r,\a} g$, $c_{r,\a} = (1-1/(r\a))$, for $r=2$. 
\end{example}

In our experiments we choose 
the regularization $\veps = h^{\beta}$ so that the corresponding
error contribution is of the same order as the discretization
error. We note that the stopping criterion has to be carefully
chosen and we used $\veps_\stop = h/20$ for $\b=1$ and 
the finer tolerance $\veps_\stop = h^{\beta +1}/20$ for non-uniform 
meshes with grading strength $\b>1$.
The experimental convergence rates obtained with these settings
for a $P1$ method are given in Figure~\ref{fig:graded_grid_1d_conv}.

\begin{figure}[p]
\includegraphics[width=9.6cm]{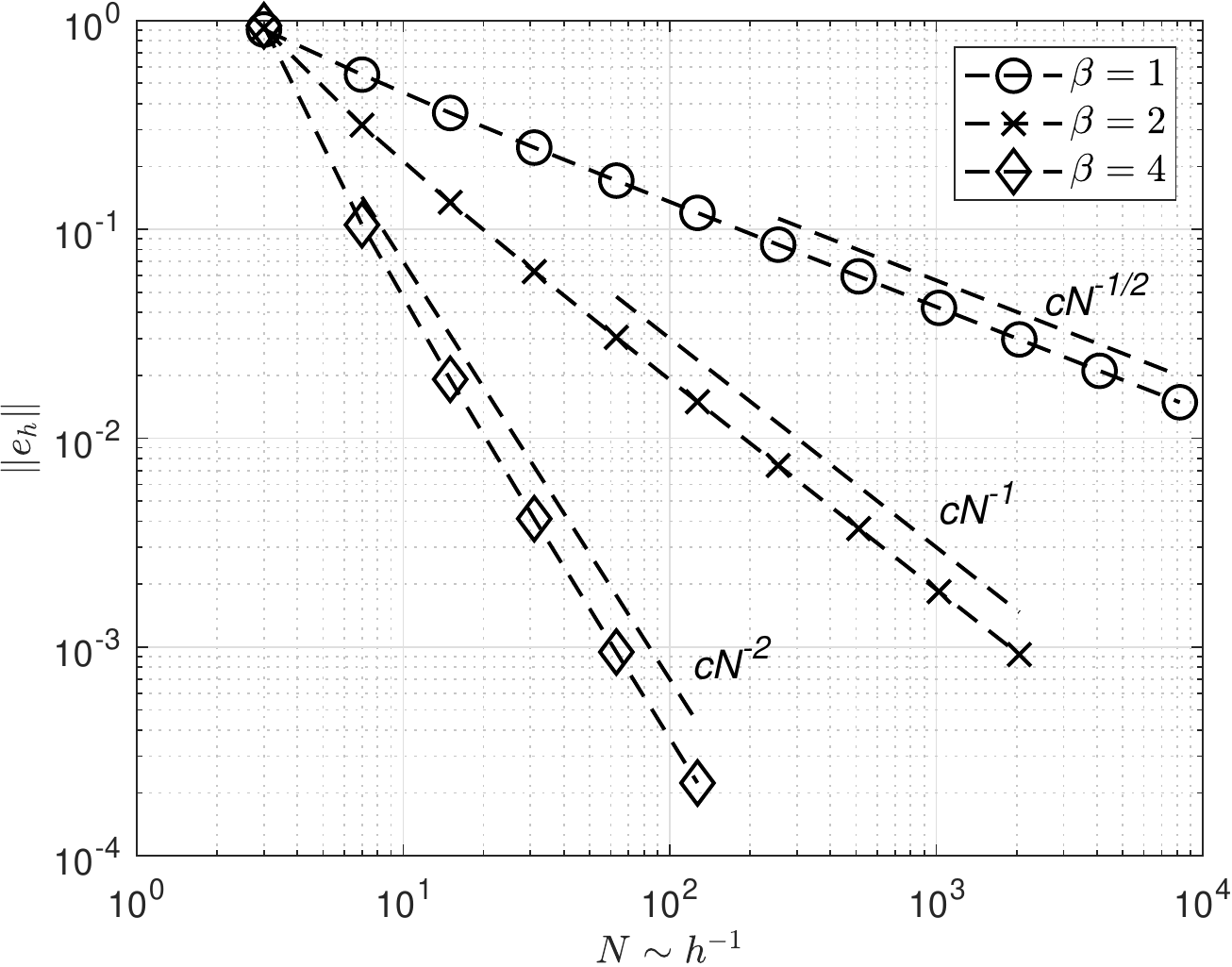} 
\caption{\label{fig:graded_grid_1d_conv} 
Experimental convergence rates in approximating the one-dimensional
ROF model defined in Example~\ref{one disc 1d} on meshes with a 
grading towards the discontinuity with different grading strengths 
$\b$ leading to convergence rates $h^{\b/2} \sim N^{-\b/2}$. }

\mbox{} \vspace{5mm} 

\includegraphics[width = 6.2cm]{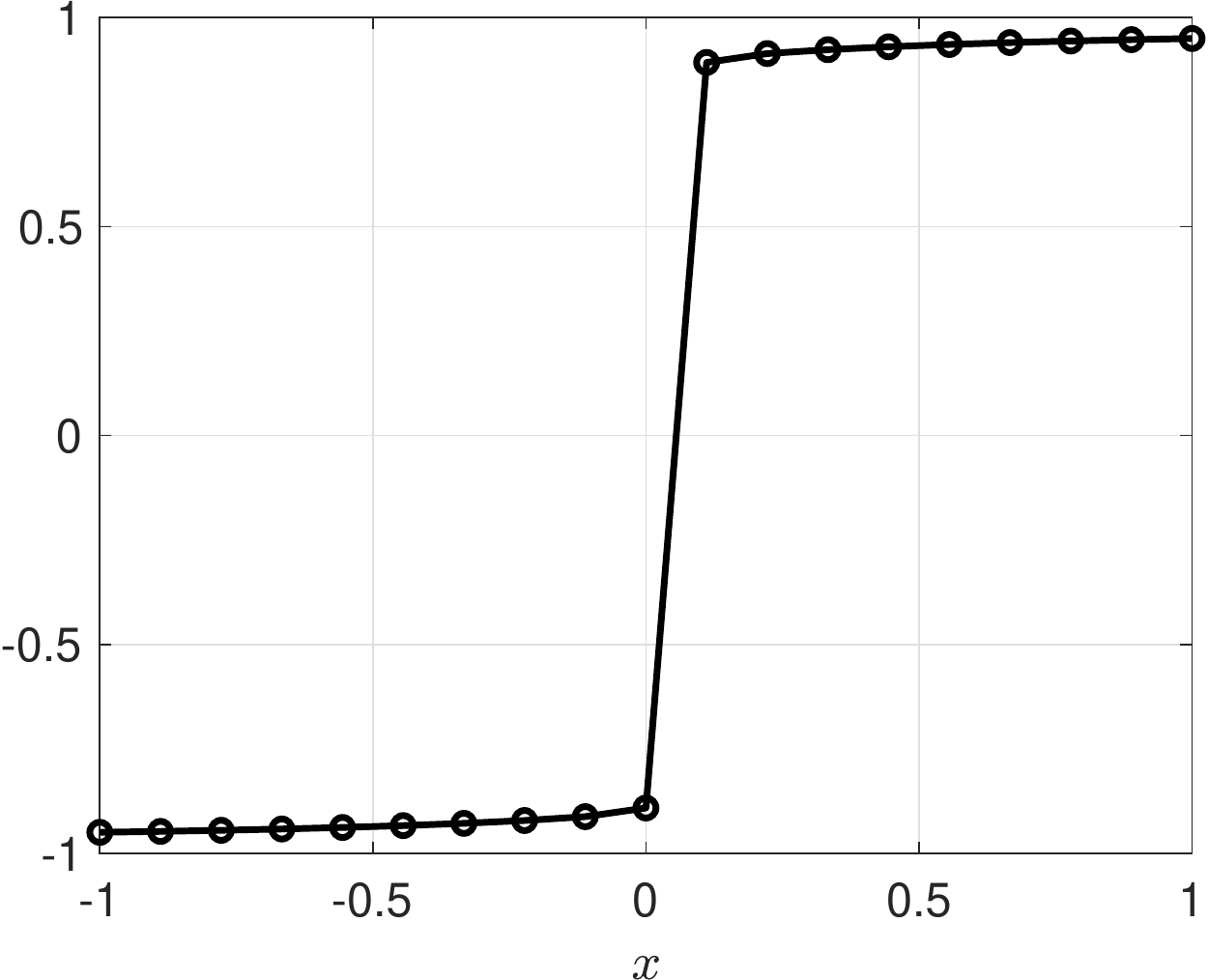}
\includegraphics[width = 6.2cm]{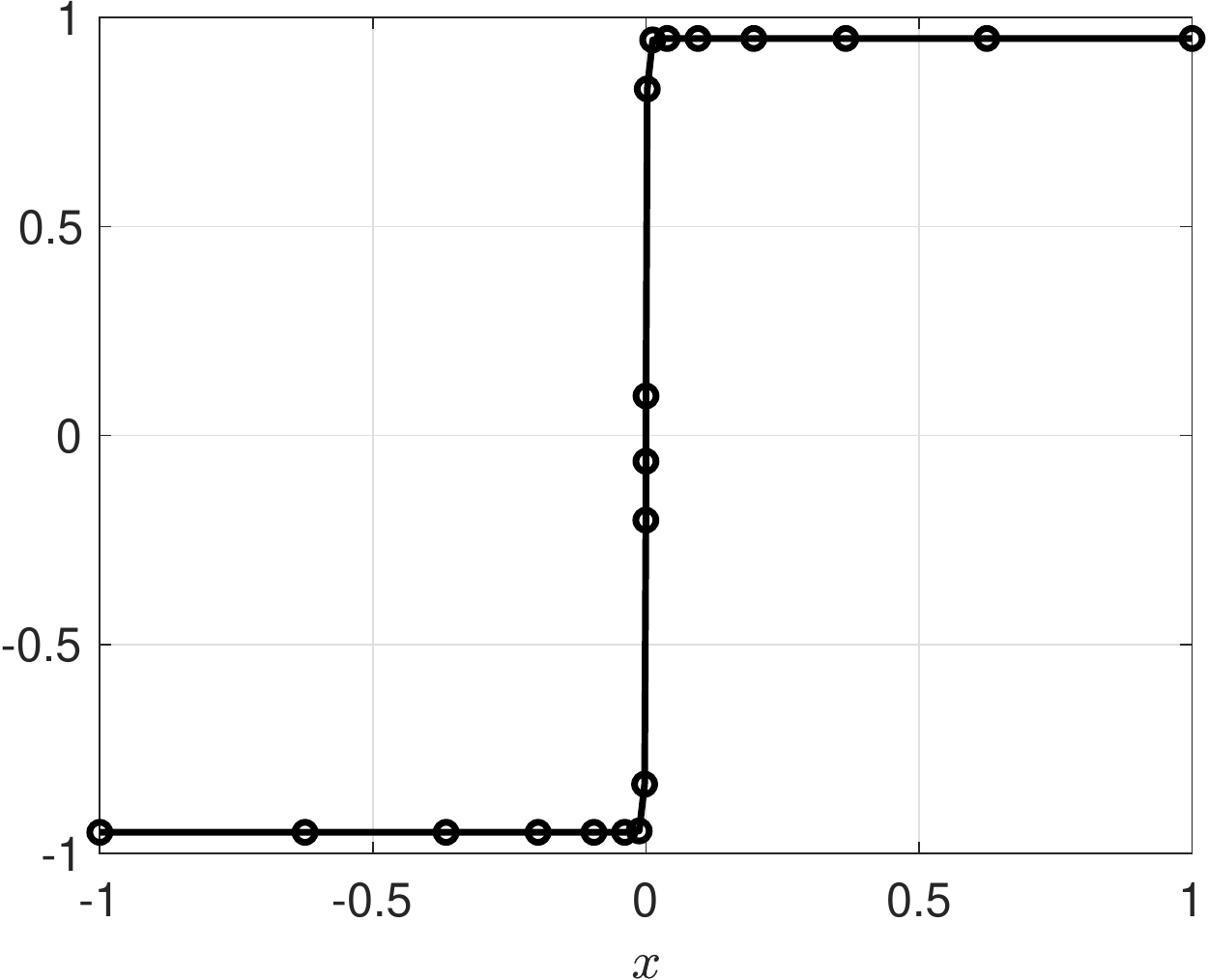} 
\caption{\label{fig:graded_grid_1d_pic} 
Numerical solutions on a uniform and a graded mesh in the
one-dimensional setting with piecewise constant solution
specified in Example~\ref{one disc 1d}. The strong grading
with $\beta = 4$ (right) leads to a high accuracy in comparison
with the uniform grid corresponding to $\b=1$ (left).}
\end{figure}

\subsection{Mesh grading in two dimensions}\label{subsec: numex graded_two}
We experimentally
investigate the performance of finite element
approximations for a standard example using mesh grading based
on the discontinuity set of the given function $g$. 

\begin{example}[Single disc phantom] \label{graded one disc}
Let $\O = (-1,1)^2$, $\a=10$, and  $g = \chi_{B_r(0)}$ for $r = 1/2$. 
For homogeneous Dirichlet boundary conditions the minimizer of
the ROF model is given by $u=c_{r,\a} g$ with $c_{r,\a} = 1-2/(r\a)$. 
\end{example}

Our initial triangulation $\cT_0$ consists of two right triangles 
that partition $\O$ and we iteratively define a sequence of regular 
triangulations $(\cT_k)_{k=0,1,\dots}$ by performing a red refinement
for all triangles in $\cT_k$ that have a non-empty intersection
with the discontinuity set $J_g = \p B_r(0)$ of $g$ and then
carrying out a red-green-blue refinement procedure to avoid hanging nodes. 
We verified that this leads to a quadratic grading strength. 
To allow for a nearly linear experimental convergence rate, we
choose $\veps_\stop = h^2/20$ and $\veps = h^2$. For the approximations
obtained with the Crouzeix--Raviart method we observe a nearly
linear experimental convergence rate. This is not the case for
approximations obtained with less flexible $P1$ finite elements, as can be seen
in Figure~\ref{fig: conv graded 2d}. We also illustrated the 
convergence behavior of the error estimator from Section~\ref{sec:prim-dual}
and observe that it serves as a reliable but non-efficient 
error bound. An explanation for this observation is that the
graded meshes are optimal for the $L^2$ error, but not
necessarily for the
error quantity controlled by the estimator, cf. Remark~\ref{rem:stronger_quant}.

\begin{figure}[p]
\includegraphics[width = 9.6cm]{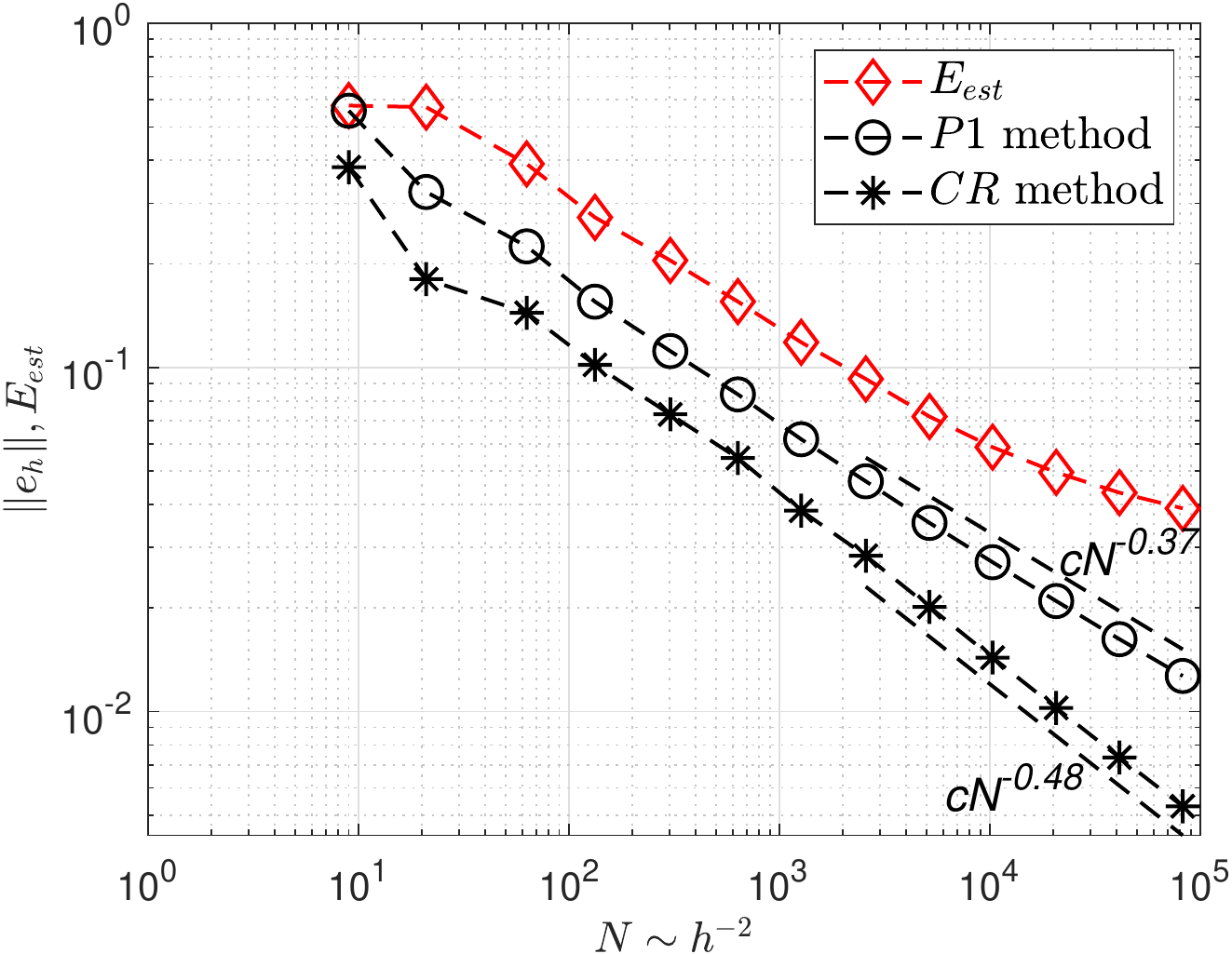}
\caption{\label{fig: conv graded 2d} Experimental convergence rates 
on quadratically graded triangulations 
in the approximation of the two-dimensional ROF-problem with 
piecewise constant solution specified in Example~\ref{graded one disc}.
Crouzeix--Raviart approximations lead to nearly linear convergence
of the $L^2$ error.}

\includegraphics[width = 9cm]{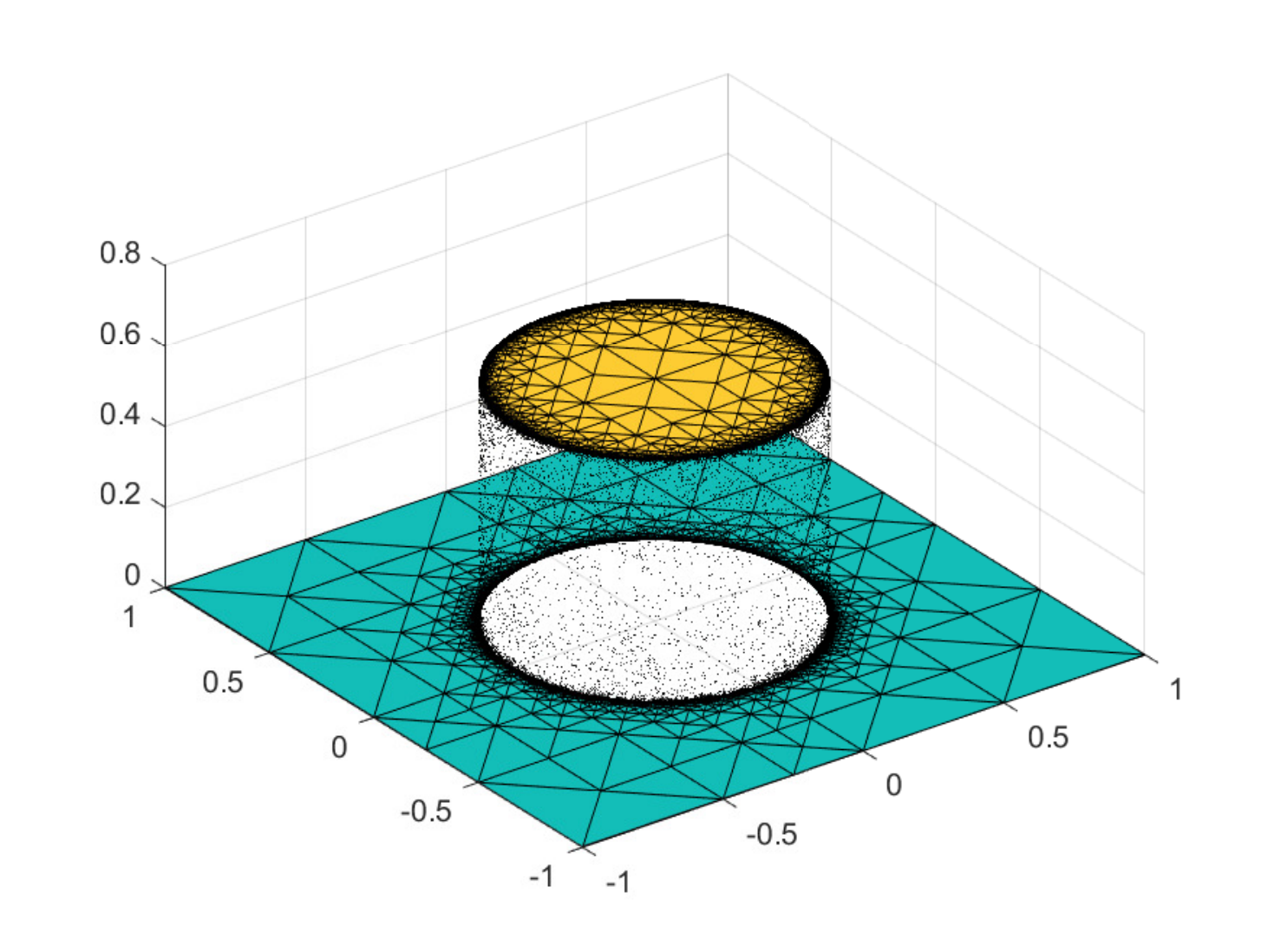}
\caption{\label{fig:graded_grid_2d} 
Projection $\Pi_h u_h$ of the Crouzeix--Raviart approximation on
the quadratically graded triangulation $\cT_{13}$ in Example~\ref{graded one disc}.
The localized refinement of the jump set leads to a high accuracy.}
\end{figure}

\subsection{Adaptive mesh refinement}
We finally investigate the automatic generation of locally refined
triangulations based on the a~posteriori error estimate provided
by Proposition~\ref{numerical_estimator}. We use the reconstructed, 
unscaled approximation~$z_h$ of the dual problem provided by the Crouzeix--Raviart
approximation $u_h^{cr}$ for the primal problem. This defines the
error estimator 
\[
E_{est} = \Big(\frac{2}{\a} \sum_{T\in \cT_h} \eta_{h,T}^2(u_h,z_h)\Big)^{1/2} 
+ \Big(\sum_{T\in\cT_h}  \|g - \Pi_h g\|_{L^2(T)}^2\Big)^{1/2},
\]
where the second sum contains data oscillation terms and the
first one the local refinement indicators $\eta_{h,T}$ which
are given by the element residuals
\[
\eta_{h,T}^2(u_h,z_h) = 
\int_T |\nabla u_h| - \nabla u_h \cdot \Pi_h z_h \dv{x}
+ \frac{1}{2\a} \int_T \big(\diver z_h - \a (u_h -g_h) \big)^2 \dv{x}.
\]
We follow established strategies in adaptive mesh refinement 
methods and select a minimal subset $M_h \subset \cT_h$ that constitutes
50\% of the total error estimator. We used the regularization parameter
$\veps = h^2$ to allow for an overall linear convergence rate, as
stopping criterion we used $\veps_{stop} = h^2/20$. 
We again use a setting that leads to a piecewise constant solution. 

\begin{example}[Piecewise constant solution] \label{graded one disc_b}
Let $\O = (-1,1)^2$, $\a=10$, and  $g = \chi_{B_r(0)}$ for $r = 1/2$. 
For homogeneous Dirichlet boundary conditions the minimizer of
the ROF model is given by $u=c_{r,\a} g$ with $c_{r,\a} = 1-2/(r\a)$. 
\end{example}

The experimental convergence rates for $P1$ and Crouzeix--Raviart
finite element approximations on adaptively generated triangulations
in Example~\ref{graded one disc_b} are shown in Figure~\ref{fig:conv_prim-dual}.
For both methods we observe an improvement over the optimal rate $O(h^{1/2})$
on sequences of uniform triangulations. The Crouzeix--Raviart method
leads to the experimental convergence rate $O(h^{0.76})$ while for
the $P1$ method we obtain the lower rate $O(h^{0.58})$.  
Our explanation for this is the good 
compatibilty of the Crouzeix--Raviart method specified
by the projection property of the quasi-interpolation operator
and the resulting discrete total-variation diminishing property. 
As addressed in Remark~\ref{rem:stronger_quant} the error estimator
$E_{est}$ cannot be expected to lead to meshes that are optimal for the
$L^2$ approximation error. 
The error estimator $E_{est}$ converges with nearly the same rate 
as the $P1$ approximation error indicating good reliability and
efficiency properties. A $P1$ finite element approximation obtained with the adaptive
mesh refinement strategy is shown in Figure~\ref {fig: approx_prim-dual_pic}
We observe an automatic local mesh refinement towards the discontinuity 
set of the solution but a weaker grading of approximately $\b \approx 1.7$
in comparison with 
Figure~\ref{fig:graded_grid_2d}. The reliable estimators
$\widehat{E}_{est}$ and $\widetilde{E}_{est}$, obtained from using the globally
and locally scaled vector fields $\hz_h$ and $\tz_h$ lead to meshes
on which these estimators converges suboptimally, cf. Figure~\ref{fig:conv_prim-dual}.
The $L^2$ error converged with similar rates reported above for meshes
constructed with $\widehat{E}_{est}$ but not with $\widetilde{E}_{est}$.

\begin{figure}[p]
\includegraphics[width=10cm]{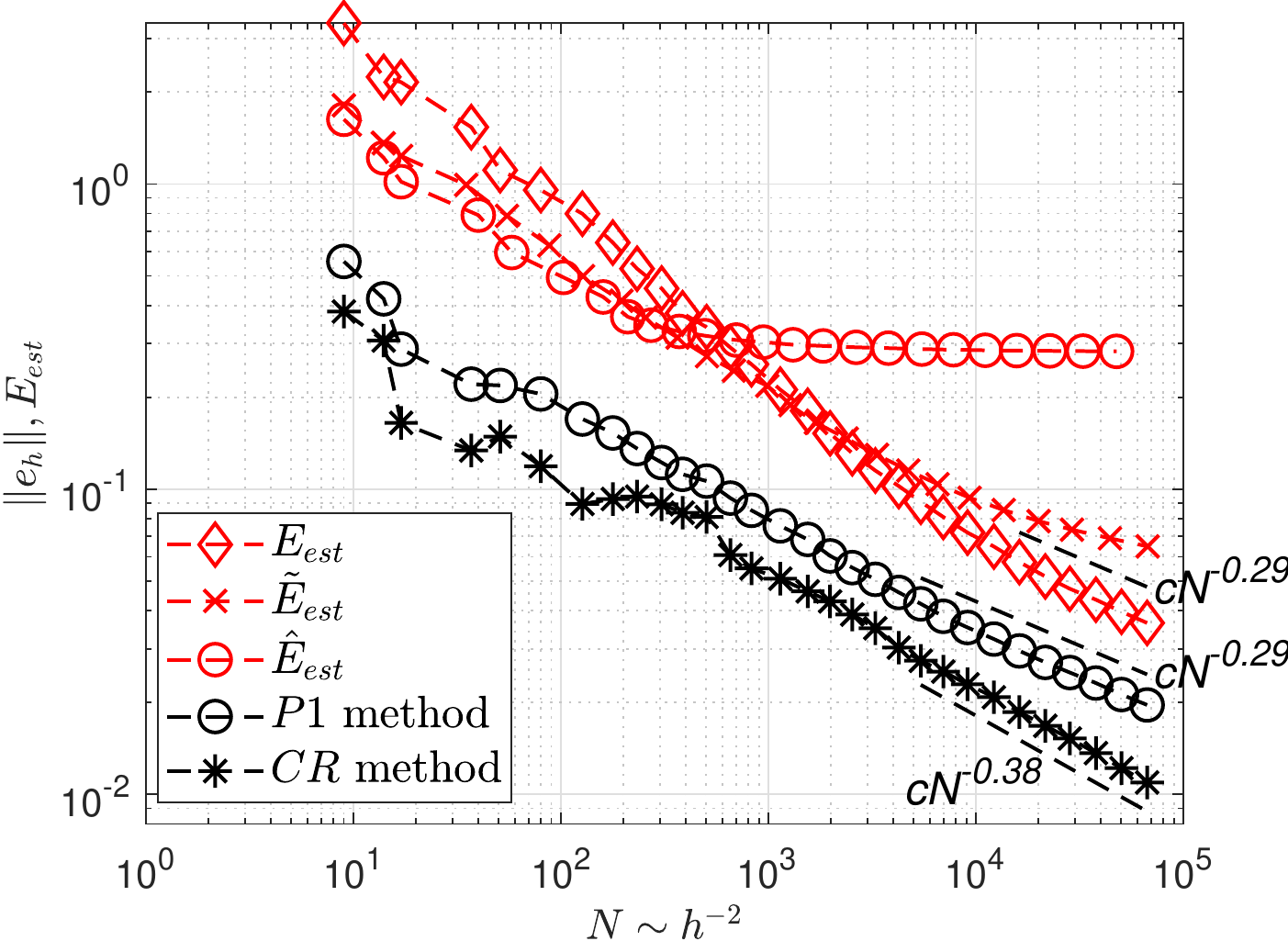} 
\caption{\label{fig:conv_prim-dual} 
Experimental convergence rates in the adaptive approximation of the ROF-problem 
defined in Example~\ref{graded one disc_b} using the primal-dual-gap error 
estimator $E_{est}$. Different experimental convergence rates are observed for
Crouzeix--Raviart and $P1$ finite element approximations. The estimators
$\widehat{E}_{est}$ and $\widetilde{E}_{est}$ obtained from globally and locally
scaled dual variables are inefficient.} 

\includegraphics[width=9cm]{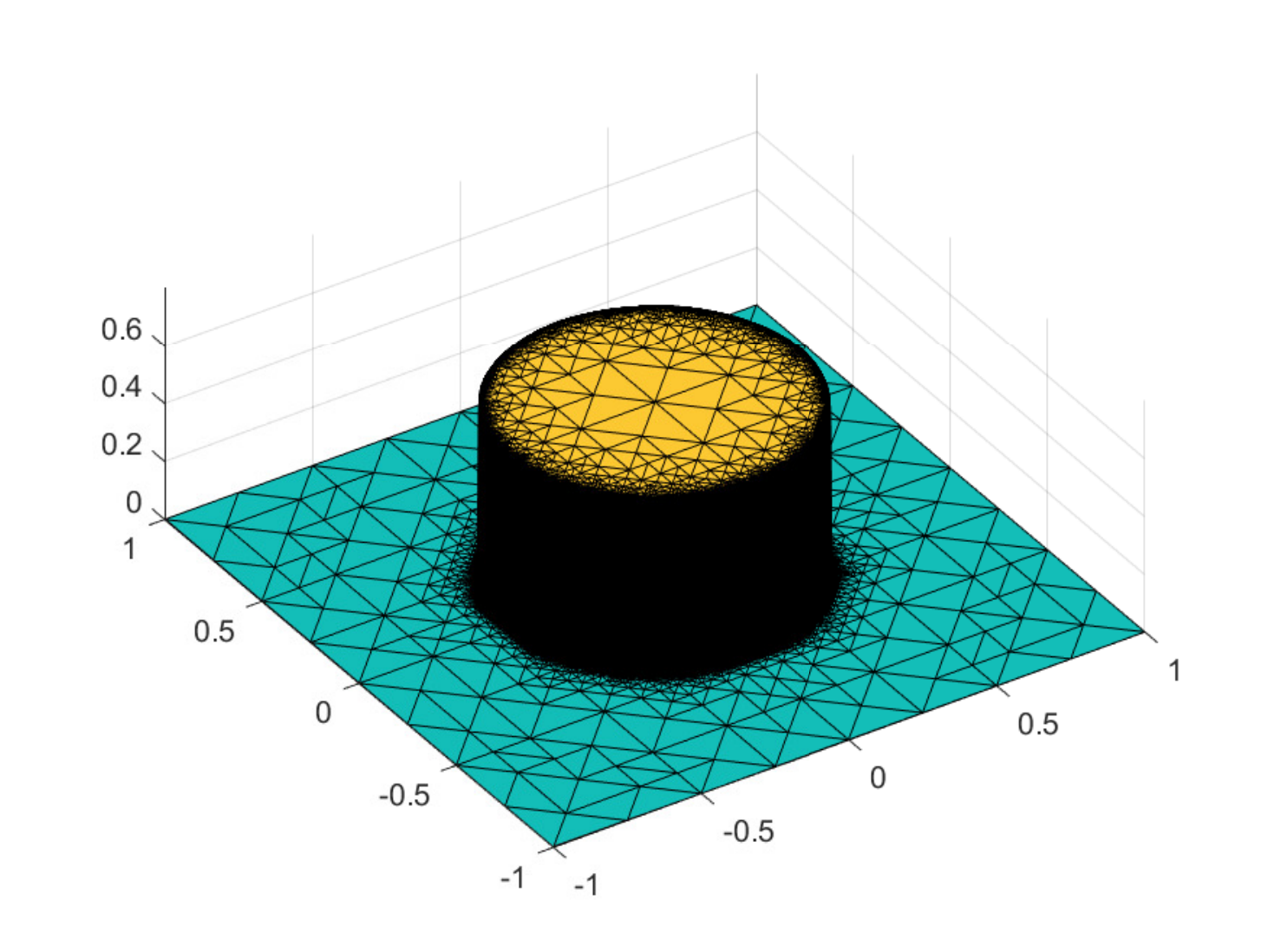}
\caption{\label{fig: approx_prim-dual_pic} 
Adaptively generated $P1$ approximation 
in Example~\ref{graded one disc_b}. The automatic mesh refinement
procedure leads to a local refinement in a neighborhood of the
discontinuity set.}
\end{figure}

\subsection*{Acknowledgments}
The authors thank Ricardo H. Nochetto for stimulating
discussions on various aspects of the results of this article. 
The first author acknowledges support by the DFG via the priority programme
SPP 1962 {\em Non-smooth and Complementarity-based Distributed Parameter Systems: 
Simulation and Hierarchical Optimization}. The second author acknowledges support 
from the ANR CIPRESSI project grant ANR-19-CE48-0017-01 of the French Agence Nationale 
de la Recherche.

\clearpage
 
\section*{References}
\printbibliography[heading=none]
\end{document}